\newcommand{\vecdiff}[2]{e_{#1}\!-\!e_{#2}}
\newcommand{\g}{{\mathfrak g}}
\newcommand{\bo}{\mathfrak{b}} 
\newcommand{\nilrad}{\mathfrak{n}} 
\newcommand{\h}{\mathfrak{h}}
\newcommand{\complex}{\mathbf C}
\newcommand{\posroots}{\ro^+}
\newcommand{\ideal}{\mathcal I}
\newcommand{\idealJ}{\mathcal J}
\newcommand{\ideals}{\mathfrak Id}
\newcommand{\al}{{\alpha}}
\newcommand{\ro}{{\Phi}}
\newcommand{\orbit}{\mathcal O}
\title{A transitivity result for ad-nilpotent ideals in type $A$}
\author{Molly Fenn} \address{
N. C. State University\\
Raleigh, NC }
\email{mafenn2@ncsu.edu}
\author{Eric Sommers} \address{University of
Massachusetts---Amherst\\ Amherst, MA 01003}
\email{esommers@math.umass.edu}
\begin{document}

\begin{abstract}
The paper considers subspaces of the strictly upper triangular matrices, which are stable under 
Lie bracket with any upper triangular matrix.   These subspaces are called ad-nilpotent ideals and there are Catalan number of such subspaces.
Each ad-nilpotent ideal $I$ meets a unique largest nilpotent orbit $\orbit_I$ in the Lie algebra of all matrices.
The main result of the paper is that under an equivalence relation on ad-nilpotent ideals studied by Mizuno and others, 
the equivalence classes are the ad-nilpotent ideals $I$ such that $\orbit_I = \orbit$ for a fixed nilpotent orbit $\orbit$.
We include two applications of the result, one to the higher vanishing of cohomology groups of vector bundles on the flag variety 
and another to the Kazhdan-Lusztig cells in the affine Weyl group of the symmetric group.  Finally, some combinatorial results are discussed.
\end{abstract}

\maketitle

\section{Introduction}

Let $G$ be a connected, 
simple algebraic group over $\complex$ and $B$ a Borel
subgroup of $G$.  
Let $\g$ be the Lie algebra of $G$, $\bo$ the Lie
algebra of $B$, and $\nilrad$ the nilradical of $\bo$.
Fix a maximal torus $T$ in $B$ with Lie algebra $\h$.
Let $W$ be the Weyl group of $G$ relative to $T$.

The paper is concerned with the subspaces of $\nilrad$ which are stable under the adjoint action of $B$ 
(or the adjoint action of $\bo$).  They  are called ad-nilpotent ideals or $B$-stable ideals of $\nilrad$.  We will refer to them simply as ideals.  
Denote the set of all ideals by $\ideals$.    
Cellini and Papi \cite{cellini-papi:2} showed that the cardinality of $\ideals$ is the $W$-Catalan number of $G$, namely
$$\frac{1}{|W|}{\displaystyle  \prod_{i=1}^n{(h+d_i)}},$$
where $d_1, \dots, d_n$ are the fundamental degrees of $W$ and $h$ is the Coxeter number, which is the largest of the fundamental degrees; this reduces to the usual Catalan numbers in type $A$.
These ideals play a role in many structural results concerning nilpotent orbits in $\g$ and related objects in the representation theory of $G$, including 
 the partial order on nilpotent orbits under the closure relation \cite{gerstenhaber}, \cite{mizuno}, \cite{sommers:e_classes},
the representations of the corresponding group over a finite field \cite{kawanaka}, 
the vanishing of odd cohomology of Springer fibers \cite{dlp}, 
cells in the affine Weyl group \cite{shi:book}, Hessenberg varieties \cite{brosnan_chow}, \cite{harada_precup}, to name a few.   
They are also interesting from a purely combinatorial perspective since $\ideals$ is in bijection with Dyck paths of length $2n+2$ in type $A_n$,
as discussed in the last section.

Let $g.X:=Ad(g)(X)$ denote the adjoint action of $g\in G$ on $X \in \g$ and $G.X$ the orbit of $X$ under $G$. 
A nilpotent orbit in $\g$ refers to the orbit $\orbit$ of a nilpotent element.  There are finitely many such orbits.
Let $I\in\ideals$.   Then $I$ consists only of nilpotent elements.  
The $G$-saturation of $I$, denoted $G.I:=\{g.X \ | \ g \in G, X \in I  \}$, 
is the closure of a single nilpotent orbit, denoted $\orbit_I$, and $\orbit_I$ is also characterized 
as the unique nilpotent orbit such that $\orbit_I  \cap I$ is dense in $I$.   
We call $\orbit_I$ the {\it associated (nilpotent) orbit} of $I$.   Conversely, every nilpotent orbit arises as $\orbit_I$
for some $I$ by the Jacobson-Morozov theorem.  See, for example, \cite{sommers:e_classes} for these results.

The partial order on the set all nilpotent orbits is defined by containment of closures (that is, $\orbit_1 \leq \orbit_2$ if and only if $\overline \orbit_1 \subset \overline \orbit_2)$.  Clearly, if $I \subset J$, then $\orbit_I \leq \orbit_J$.   
Gerstenhaber computed the partial order for classical groups  \cite{gerstenhaber}.  
There, he introduced $\ideals$, which were called triangular subalgebras, and described an algorithm for finding $\orbit_I$ in type $A_n$.   
We review Gerstenhaber's algorithm and give a new proof in Section \ref{combinatorics}.
In \cite{mizuno} Mizuno described the partial order on nilpotent orbits in the exceptional Lie algebras and computed the component groups of nilpotent elements by studying many operations on $\ideals$ that preserve the associated nilpotent orbit.  This paper is concerned with the simplest operation (see Definition \ref{basicmove}),  which we call the {\it basic move}.  This operation is also important in studying cohomology of vector bundles on $G/B$ \cite{broer:vanishing}. 
 Our main result (see Theorem \ref{thmA}) is that in type $A_n$ this operation is transitive on the set of all $I \in \ideals$ with the same associated orbit.   A transitivity result of this kind is proved in \cite{dlp} for some of the ideals in the exceptional groups when $\orbit_I$ is distinguished. 
We give two applications of Theorem \ref{thmA} in Section \ref{applications} and describe some combinatorially results in Section \ref{combinatorics}.  

In other types, additional operations on $\ideals$ were studied in the PhD thesis of the first author \cite{fenn}, where it is conjectured that an analogous  transitivity result holds when these additional operations are allowed.   This has been checked in the exceptional groups by computer, but is still open for other classical types.  Indeed, if one knows such transitivity results, then the partial order on nilpotent orbits reduces to a combinatorial problem involving only $\ideals$.
%of calculating the partial order on $\ideals$ induced from the inclusion of ideals.  
Namely, consider equivalence classes $[I]$ on $\ideals$ defined by the condition $\orbit_I = \orbit$.  
The partial order on $\ideals$ by inclusion of ideals induces a partial order on equivalence classes in $\ideals$.  
Then, $[I] \leq [J]$ if and only if $\orbit_I \leq \orbit_J$.  This was proved uniformly in \cite{sommers:e_classes}.  Of course, the idea is exactly the one used by Gerstenhaber and Mizuno to explicitly compute the partial order.

\section{Basic move}

Let $\ro  \subset \h^*$ be the roots of $G$ on $\g$ relative to $T$.  
Let $\ro^+$ and $\Pi$ be the positive roots and simple roots determined by $B$.  
For $\beta \in \ro$, let $\g_\beta$ be the $\beta$ weight space in $\g$.

Since ideals are $T$-stable, any $I \in \ideals$ decomposes as sum of root spaces.  Define $\ideal \subset \ro^+$ 
by the equation 
\begin{eqnarray}\label{ideal_to_cominbatorial_ideal}
I = \displaystyle \bigoplus_{\beta \in \ideal} \g_\beta.
%\displaystyle \bigoplus_{\beta } \g_\beta.
\end{eqnarray}

Let $\al \prec \beta$ denote the usual partial order on roots; that is,
$\al \prec \beta$ if and only if $\beta - \al$ is a sum of positive roots.
Then $\ideal$ has the property that if $\al \in \ideal$
and $\al \prec \beta$, then $\beta \in \ideal$.
In turn, any $\ideal \subset \posroots$ with this property determines
a unique $I \in \ideals$ via equation \eqref{ideal_to_cominbatorial_ideal}.
Such an $\ideal$ is an upper order ideal in the poset $\ro^+$ and we refer 
refer to them as root  ideals or just ideals and go back and forth between $I$ and its root ideal $\ideal$.   
The set of minimal roots 
in $\ideal$ form an antichain (no two distinct elements are comparable) and each 
$\ideal$ is determined  by its antichain.
Let $\ideal_{min}$ denote the minimal roots of $\ideal$.

For $\al \in \Pi$, let $P_\al$ be the parabolic subgroup of $G$ containing $B$ 
attached to $\al$ and let $s_\al \in W$ be
the simple reflection attached to $\al$.  Then $P_\al/B$ is a projective line.
Given distinct $I, J \in \ideals$, suppose there exists $\al \in \Pi$ such that $P_\al. J= I$.  
Then necessarily $J \subset I$ and (1) $\dim I = \dim J +1$ since $P_\al/B$ is dimension one; (2) $I$ is stable under $P_\al$; and 
(3) $\orbit_I = \orbit_J$ since $\orbit_I$ meets $J$ and $\orbit_J \leq \orbit_I$.   
For such a pair of ideals to exist, we must have 
$I = J \oplus \g_{\beta}$ with $\beta$ a minimal root of the associated root ideal $\ideal$, and $s_\al(\ideal) = \ideal$.
Hence, $s_\al(\beta) = \beta+k\alpha \in \ideal$ and thus $k<0$ since $k=0$ contradicts $P_\al.J \neq J$ and $k>0$ 
contradicts that $\beta \in \ideal_{min}$.  
Thus $\langle \beta, \al^\vee \rangle \in \{-1,-2,-3\}$. For some applications, it is useful to insist that this value is $-1$ (of course, this is always true in type $A$ and other simply-laced types).   

\begin{defn}  
\label{basicmove}	
	Let $I \in \ideals$ and $\ideal$ the associated root ideal.  Let $\al \in \Pi$ and $\beta \in \ideal_{min}$ with 
$\langle \beta, \al^\vee \rangle = -1$.   Then $\idealJ := \ideal \, \backslash \, \{\beta \}$ is a root ideal (with associated ideal $J \in \ideals$).
If $s_\al(\ideal) = \ideal$, then $\orbit_J = \orbit_I$ and we say that $I$ and $J$ (or $\ideal$ and $\idealJ$) are related by the {\it basic move} and write $I \sim J$ (respectively, $\ideal \sim \idealJ$).
\end{defn}

We can now state the main result.
\begin{thm}  \label{thmA}
	Let $I,J \in \ideals$.  	In type $A_n$ if $\orbit_I = \orbit_J$, then $I$ and $J$ are related by a sequence of basic moves.  
	That is, the equivalence relation on $\ideals$ induced by the transitive action of basic moves coincides with the equivalence relation 
	on $\ideal$ defined by having the same associated nilpotent orbit.
\end{thm}

\section{Preliminaries}

From now on, let $G$ be the general linear group $GL_{n+1}(\complex)$ and 
$B$ the upper triangular matrices and $T$ the diagonal matrices in $G$.
Let $t_{ij}$ be the matrix with $1$ in the $(i,j)$ spot and zero elsewhere.  
Take the standard basis $\{e_i\}$ of $\h^*$ so that the simple roots are 
$\al_i = e_i-e_{i+1}$ for $1 \leq i \leq n$.  Each positive root has the form 
$\alpha_i +\alpha_{i+1} + \cdots + \alpha_j = \vecdiff{i}{j+1}$ for $1 \leq i<j \leq n$, 
corresponding to the weight space in $\g =M_{n+1}(\complex)$ spanned by $t_{ij}$, 
and we denote this root by $[ i , j ]$.
%We write the positive roots $
%List the simple roots $\Pi$ as $\alpha_1, \dots, \alpha_{n}$.
%Then any positive root is of the form $\alpha_i +\alpha_{i+1} + \cdots + \alpha_j$, which we write as $[ i , j ]$.
The usual partial order on the positive roots is expressed as $[ i ,
j ] \preceq [i', j']$ if and only if $i' \le i$ and $j \le j'$.  
Since a root ideal $\ideal$ is determined by the minimal roots in 
$\ideal$ under the partial order, 
we can specify an ideal by its collection of minimal roots and write 
 $\ideal_{min}= \{  [a_k, b_k]\}$ for the minimal roots.
 If $[a,b] \in \ideal_{min}$ we say $a$ is a left endpoint of $\ideal$ and $b$ is right endpoint of $\ideal$.
%we can specify an ideal by its collection of minimal roots.:  these will be
%intervals $[i,j ]$ such that for any two intervals $[i,j ]$ and $[i' , j' ]$ 
%with $i \leq i'$, we have $j \geq j'$.   So attached to $\ideal$ is an antichain $\ideal_A$ that 
%we write as $\ideal_A= \{  [a_i, b_i]\}$. 
If $s_{\alpha_{j}}(\ideal) =\ideal$, we say $\ideal$ is $j$-stable.
This translates to saying $j$ is neither a right endpoint, nor a left endpoint, of $\ideal$.
%%$j \neq a_k$ and $j \neq b_k$ for all $[a_k,b_k] \in \ideal_{min}$.
%We say that $\ideal$ is $j$-stable if $\ideal$ is stable under
%the simple reflection $s_{\alpha_{j}}$.  This is equivalent to 
%the corrpesonding nilpotent ideal being stable under the
% minimal parabolic subgroup $P_{\alpha_j}$.  In terms of minimal roots 
% $\ideal_A =  \{  [a_i, b_i]\}$  is $j$-stable if and only if
% $j \neq a_i$ and $j \neq b_i$ for any $i$. 

Let $P$ be a standard parabolic subgroup of $G$, i.e., a closed subgroup containing $B$.  
Let $\nilrad_P$ denote the nilradical of the Lie algebra of $P$.  Then $\nilrad_P \in \ideals$.  
We will also denote by $\nilrad_P$ the associated root ideal.  
All of the minimal roots of $\nilrad_P$ are simple roots, and conversely if 
$\ideal_{min}$ are all simple, 
then $I$ is equal to some $\nilrad_P$. 
We can designate $P:=P_{j_1, \dots, j_l}$ in terms of the minimal roots 
$\al_{j_1}, \dots, \al_{j_l}$ of $\nilrad_P$ where $1 \leq j_1 <   \dots < j_l \leq n$.   
The sequence $(j_1, \dots, j_l)$ gives rise to a composition of $n+1$ with $l+1$ parts:
\begin{equation}\label{composition}
(c_1, \dots, c_l, c_{l+1}):=(j_1, j_2-j_1, \dots, j_l-j_{l-1}, (n+1)-j_l).
\end{equation}
The standard Levi subgroup of $P_{j_1, \dots, j_l}$ is isomorphic 
to $\prod_i GL_{c_i}$, embedded as 
block diagonal matrices in $G$.
Denote by $\mu_P$ the partition of $n+1$ obtained by arranging the composition in \eqref{composition} in weakly descending order.

The proof of Theorem \ref{thmA} proceeds by showing that every $I \in \ideals$ is equivalent to some $\nilrad_P$.  Then it is shown that $\nilrad_P \sim \nilrad_{P'}$ if
and only if $\mu_P = \mu_{P'}$.  
%This suffices to complete the proof since every nilpotent orbit appears as some $\orbit_I$ and in type $A_n$, the nilpotent orbits are indexed, via Jordan canonical form, by the  partitions of $n+1$.  We can also alternatively finish the proof by using that $\lambda(\nilrad_P)$ is the dual partition to $\mu_P$, which we revisit in Section \ref{combinatorics}.

We start with some lemmas.
Let $[a,b] \in  \ideal_{min}$.   Omitting the root $[a,b]$ from $\ideal$ gives a new 
ideal $\idealJ$.   
Let $A = \ideal_{min} \, \backslash \{[a,b]\}$.   Then $\idealJ_{min} = A \cup S$
where $S \subset \{[a-1, b], [a, b+1] \}$ specified by 
$[a-1, b] \in S$ if $a-1$ is not $0$ nor a left endpoint of $\ideal_{min}$
and $[a, b+1] \in S$ if $b+1$ is not $n+1$ nor a right endpoint of $\ideal_{min}$.
% \geq 1$\ideal'_{min} = A \cup \{[a-1, b], [a, b+1] \}$ if $a>1$depending on whether  $a-1$ is already a left endpoint or $b+1$  is already a right endpoint for $\ideal_A$.   In the case where $a=1$ ($b=n$) we omit the interval
%$[a-1, b]$ (respectively, $[a, b+1]$) from the description of $\ideal'_A$.
We now describe the basic move from Definition \ref{basicmove} in type $A_n$. % (see \cite{achar-sommers:local}).

\begin{defn}[Basic Move in type $A_n$]
	\label{Abasicmove}
Let $\ideal$ be a root ideal and $[a,b] \in \ideal_{min}$. 
If $\ideal$ is $(a\!-\!1)$-stable or $(b\!+\!1)$-stable.  Then $\ideal$ and $\idealJ:=\ideal \backslash \{[a,b]\}$ are equivalent under the basic move (\ref{basicmove}) and we  write $\ideal \sim \idealJ$ or $\ideal_{min}\sim \idealJ_{min}$.
\end{defn}

We record a few lemmas that follow quickly from the basic move.

\begin{lem}\label{stacked_move}
Let $\ideal$ be an ideal with $\ideal_{min} = A \cup \{[a,j], [b, j+1]\}$ for $a<b \leq j$.
Then $[b,j]$ is not comparable to any root in $A$ so there is an ideal $\idealJ$
with $\idealJ_{min} = A \cup \{[b,j]\}$.   
Assume the intervals in $A$ contain no endpoints $j$ with $a<j<b$.  
Then $\ideal \sim \idealJ$.  
\end{lem}

\begin{proof}
The hypothesis on $A$ means that $\idealJ$ is $(b-1)$-stable, so 
%by Lemma \ref{Abasicmove} we can omit $[b,j]$ from $\idealJ$ and get
$$\idealJ_{min} \sim  A \cup\{[b-1, j],[b,j+1]\}.$$	 Set $A' = A \cup\{[b,j+1]\}$.
Then using the basic move multiple times gives
$$A \cup\{[b-1, j],[b,j+1]\} \sim A' \cup\{[b-2, j]\} 
\sim \dots \sim A'\cup\{[a, j]\} $$
since $A'$ does not contain $b-2, b-3, \dots, a+1$.
\end{proof}

A similar argument gives
\begin{lem}\label{endpoint_move}
	Let $\ideal \in \ideals$ with $\ideal_{min} = A \cup \{[a,n]\}$.
	Let $\idealJ$ have $\idealJ_{min} = A \cup \{[b,n]\}$.   
	Assume the intervals in $A$ contain no endpoints $j$ with $a<j<b$.  
	Then $\ideal \sim \idealJ$.  	The result also holds if 
	$[a,n]$ is replaced by $[1,b]$ and $[b,n]$ by $[1,a]$ 
	in the definitions of $\ideal$ and $\idealJ$.
\end{lem}
%It may happen that one of $[a-1, b]$ or $[a, b+1]$
%is not minimal for $I'$, in which case we will omit it from our
%description of $I'$.  

%To be more explicit, we refer to the equivalence of 
%$I$ and $I'$ as the \emph{basic move} for $a-1$
%(respectively, for $b+1$) to emphasize which parabolic stablizes $I$.  
%%The move can also be applied when $[a-1,b]$ or $[a, b+1]$ are not minimal for $U'$.

\begin{exam}
In $A_3$, we have
$\{[1]\} \sim \{[1,2]\} \sim \{[1,3]\}$ and 
$\{[3]\} \sim \{[2,3]\}  \sim \{[1,3]\}$ by Lemma \ref{endpoint_move} with $A = \emptyset$.  
And $\{[2]\} \sim \{[1,2],[2,3]\}$ by Lemma \ref{stacked_move} with $a=1, b=2, j=2$ and $A = \emptyset$.
Finally,  $\{[1],[2]\} \sim \{[1], [2,3]\} \sim \{[1], [3]\} \sim \{[1,2], [3]\} \sim \{[2], [3] \}$ via basic moves.
Each of $\emptyset$ and $\{[1],[2],[3]\}$ are their own equivalence class and this accounts for the $5$ classes among the $14$ elements of $\ideals$.
Also each class contains at least one $\nilrad_{P}$.
\end{exam}

%Before proving the main theorem, 
%we illustrate it by describing the equivalence classes in type $A_2$ and $A_3$.
%
%\begin{exam}
%	The equivalence classes of antichains in type $A_2$ and $A_3$ with partition for the corresponding nilpotent orbit are
%	\smallskip
%	
%	$\mathbf A_2$
%	\begin{itemize}
%			\item $[1,1,1]$: $\emptyset$ 
%		\item $[2,1]$:  $\{[1]\}$, $\{[2]\}$, $\{[12]\}$
%		\item $[3]$:  $\{[1],[2]\}$ 
%	\end{itemize}
%%\end{exam}
%
%%\begin{exam}
%	%The equivalence classes in type $A_3$ are:
%		\smallskip
%	
%	$\mathbf A_3$
%		\begin{itemize}
%		\item  $[1^4]$:   $\emptyset$ 
%		\item  $[2,1^2]$:   $\{[1]\}$, $\{[3]\}$, $\{[1,2]\}$, $\{[1,3]\}$, $\{[2,3]\}$
%		\item $[2^2]$:  $\{[2]\}$, $\{[1,2], [2,3]\}$
%		\item $[3,1]$:  $\{[1],[2]\}$, $\{[1], [3]\}$, $\{[2], [3]\}$, $\{[1,2], [3]\}$, $\{[1], [2,3]\}$
%		\item $[4]$:  $\{[1],[2], [3]\}$ %(=$\nilrad$)
%	\end{itemize}
%\end{exam}
%

%As desired each equivalence class in the examples 
%contains the nilradical of some parabolic subalgebra: these
%are the sets whose elements are all singletons.  The simple roots for the Levi factor
%of the parabolic are given by those simple roots which do not appear in the sets.
%

\section{Proof of Theorem \ref{thmA}}

We call an antichain $S \subset \ro^+$ a {\it right staircase} if 
there exist positive integers $c \leq d$ and $a_i$'s such that
\begin{equation}
	S = \{[a_i, i] \ | \ c \leq i \leq d \} \text{ and either } d =n \text{ or } c \leq a_d.
\end{equation}
Note that $a_i < a_j$ for $c \leq i <j \leq d$.
Examples of such $S$ in $A_8$ are 
$\{ [2,5],[3,6],[6,7] \}$ and $\{ [1,5],[3,6],[4,8] \}$. % and $\{ [2,5],[3,6],[7] \}$.

\begin{lem} \label{no_lefties}
If $\ideal_{min}$ 
contains a right staircase, then no minimal root of $\ideal$ 
can have left endpoint $j$ with $a_c \leq j \leq c$
unless $j = a_i$ for some $i$. %with $c \leq i \leq d$.  
\end{lem}

\begin{proof}
If $[j,k]$ were a minimal root with $a_c \leq j \leq c$, 
then $k > c$; otherwise $[j,k] \prec [a_c,c]$.  
Furthermore $k>d$ since only one minimal root can have a given right endpoint and the values $i$
with $c \leq i \leq d$ are spoken for.   But if $k>d$, then $d \neq n$ and we must have $c \leq a_d$.
But then $[a_d, d] \prec [j,k]$ since $j \leq c \leq a_d$ and $k>d$, which is a contradiction.
\end{proof}

\begin{lem}\label{staircase_exists}
	Every ideal is equivalent to an ideal containing a right staircase. 
\end{lem}
\begin{proof}
	Let $[a,b]$ be the interval of $\ideal$ with largest $b$. 
	If $b<n$, 	Then $\ideal$ is $b+1$-stable, so 
	apply the basic move and omit $[a,b]$ 
	to obtain an equivalent ideal with 
	interval $[a,b+1]$.  Repeating this process, we see that 
	the equivalence class of $\ideal$ 
	contains an ideal with interval $[a,n]$, which qualifies as a right staircase.
\end{proof}

If $\ideal_{min}$ contains a right staircase $S$ and furthermore 
no minimal root of $\ideal$ has right endpoint $j$ with $a_c \leq j \leq c-1$,
then we say $S$ is a {\it pure right staircase} in $\ideal_{min}$.

\begin{lem}\label{pure_staircase}
	Suppose $\ideal_{min}$  contains the pure right staircase
	$$S:= \{[a_i, i] \ | \ c \leq i \leq d \}.$$
	% such that 
%		no minimal root of $I$ has right endpoint $j$ for $a_c \leq j \leq c-1$.
		Then $\ideal$ is equivalent to an ideal containing the simple root $\alpha_c = [c,c]=[c]$. 
\end{lem}

\begin{proof}
	We can assume $a_c <c$ , otherwise $[c] \in S$ and there is nothing to show.
	
	Suppose $a_d <c$.  We will show that $\ideal$ is equivalent to an ideal containing the same staircase except that
	$c \leq a_d$.  Since $a_d<c$, then $d=n$ from the definition of a right staircase.  
	For any $j$ with $a_d <j<c$, $j$ cannot be  a right endpoint by the pure hypothesis and the fact that $a_c \leq a_d$; 
	nor can $j$ be a left endpoint by Lemma \ref{no_lefties}.
	This means $[a_d, n]$ can be replaced in $\ideal$ by $[c,n]$ to yield an equivalent ideal 
	by Lemma \ref{endpoint_move}, completing this part of the argument.
	
  	Next, we show that if $c <a_d$, then $\ideal$ is equivalent to an ideal containing a pure staircase with $a_d =c$.
	Let $j$ be the largest number with the property that $a_j< c$.  Such a $j$ exists since we assumed $a_c < c$.   
	Then $a_c \leq a_j \leq c-1$.  By the pure assumption $a_j$	
	does not occur as the right endpoint of a minimal root of $\ideal$.
	Since $c < a_d$, $j \neq d$ and thus $[a_{j+1}, j+1]$ is in the staircase.
	Now we can apply (the second part of the proof of) Lemma \ref{stacked_move} 
	to replace $[a_j,j]$ with $[c, j]$ since $k$ is neither a right nor left endpoint in $\ideal$ for
	$a_j < k < c$.   This shows that $\ideal$ is equivalent to an ideal containing a pure staircase with $a_d = c$ since we can just 
	forget all the later roots in the original staircase.
	
	If $c=d$, we are already done.  If not, we can apply Lemma \ref{stacked_move} to 
	$[a_{d-1}, d-1]$ and $[a_d=c, d]$, replacing those 
	with $[c, d-1]$.  The proof follows by induction on the difference $d-c$ and 
	we arrive at the interval $[c]$, as desired.
\end{proof}

\begin{exam}
The steps in the lemma for $\{ [2,5],[3,6],[6,7] \}$  are with $c=5$:
$$\{ [2,5],[3,6],[6,7] \} \sim  \{ [2,5],[5,6],[6,7] \}$$  and then
$$ \{ [2,5],[5,6, [6,7]]\} \sim  \{ [5], [6,7] \}.$$ 
\end{exam}

\begin{cor}\label{grassmann_switch}
	For any $k \geq \frac{n-1}{2}$, the root ideal %$\ideal$ equal to pure right staircase 
	$$\ideal:= \{[j, j+k] \ | \ 1 \leq j \leq n-k \}$$
	is equivalent to $\{[k+1]\}$ and also to $\{[n-k]\}$.
\end{cor}
\begin{proof}
	This is a pure staircase with $c = k+1$ with $a_d = n-k$.  
	Since $n-k \leq k+1$, we have $a_d\leq c$, so the first part of the previous proof  implies that $\ideal$ 
	is equivalent to a pure staircase with $a_d=c$.   Then the last paragraph implies $\ideal \sim \{[c]\}$.   	A symmetric argument as in the lemma but moving right endpoints instead of left gives  $\ideal \sim \{[n-k]\}$. 
\end{proof}
	
\begin{prop} \label{has_a_simple}
	Every ideal $\ideal$ is equivalent to an ideal with a minimal root which is a simple root.
	 Hence, every ideal is equivalent to some $\nilrad_P$.
\end{prop}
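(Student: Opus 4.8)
The plan is to bootstrap from the pure-staircase machinery already assembled. By Lemma \ref{staircase_exists}, after applying basic moves we may assume $\ideal_{min}$ contains a right staircase $S = \{[a_i,i] \mid c \le i \le d\}$. If $S$ is \emph{pure}, then Lemma \ref{pure_staircase} immediately produces an equivalent ideal containing the simple root $[c]$, and we are done. So the whole content of the proof is to reduce to the case where the staircase we have found is pure, i.e. to eliminate any minimal root whose right endpoint $j$ lies in the range $a_c \le j \le c-1$.

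The first step is to choose the staircase carefully rather than arbitrarily. Among all right staircases appearing in the (possibly modified) ideal, I would take one with $c$ as small as possible, or — what is likely cleaner — push the staircase as far left as possible using Lemma \ref{endpoint_move} and Lemma \ref{stacked_move} in the spirit of the proof of Lemma \ref{pure_staircase}. Concretely, suppose $[j,k] \in \ideal_{min}$ with $a_c \le k \le c-1$. Note $j < a_c$ is forced (otherwise $[j,k]$ would be comparable to an element of $S$ or would itself be part of a longer staircase we should have included). The idea is then to grow the staircase $S$ \emph{downward} to absorb the right endpoint $k$: since the values $a_c, a_c+1, \dots, c$ that could be left endpoints are constrained by Lemma \ref{no_lefties}, and the right endpoints in that range are constrained by the choice of $[j,k]$ being, say, the one with largest such $k$, one can use basic moves to slide $[a_c,c]$ down to $[k+1, c]$ or to otherwise incorporate $[j,k]$ into the bottom of the staircase, strictly decreasing the "defect" of $S$ from being pure. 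Iterating, the defect reaches zero and Lemma \ref{pure_staircase} applies.

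The main obstacle I anticipate is bookkeeping: making precise the induction that terminates. The natural measure is something like (number of minimal roots with right endpoint in $[a_c, c-1]$) together with the gap $c - a_c$, ordered lexicographically, and one must check that each absorption step genuinely decreases it without creating new obstructing roots further left — this is exactly the kind of interval-combinatorics verification that Lemmas \ref{no_lefties}, \ref{stacked_move}, \ref{endpoint_move} were designed to feed into, so I expect it to go through, but it requires care about the boundary cases $a_c = 0$ / $j = 1$ and $d = n$. Once an equivalent ideal with a simple minimal root $[c]$ is in hand, the second sentence of the statement follows formally: having one simple minimal root, one can repeat the argument on the "rest" of the ideal (the minimal roots not involving coordinate $c$, which split into two independent type-$A$ subproblems on $\{1,\dots,c-1\}$ and $\{c+1,\dots,n+1\}$), and by induction on $n$ arrive at an ideal all of whose minimal roots are simple, i.e. some $\nilrad_P$.
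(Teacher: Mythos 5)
Your high-level plan matches the paper's: get a right staircase via Lemma~\ref{staircase_exists}, and if it's pure finish with Lemma~\ref{pure_staircase}, so the content is the reduction to the pure case. But the reduction step you sketch is where the gap is. You propose picking the obstructing root $[j,k]$ with largest right endpoint $k \in [a_c,c-1]$ and then ``slid[ing] $[a_c,c]$ down to $[k+1,c]$.'' This operation is not obviously a composition of basic moves: Lemma~\ref{endpoint_move} and Lemma~\ref{stacked_move} require the slide range to contain no endpoints of $\ideal_{min}$, and in the range $(k, a_c)$ you may well have the left endpoint $j$ of the very obstructing root you are trying to absorb (you correctly noted $j < a_c$), which blocks the slide. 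More importantly, the direction of your slide is the wrong one to set up a clean induction, and your proposed lexicographic measure (number of obstructing roots, then $c-a_c$) is not verified to decrease; you say so yourself.

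The paper's reduction runs the other way and decreases a much simpler quantity: the starting index $c$ of the staircase. Concretely, take $m$ to be the \emph{largest} right endpoint $< c$ of a minimal root $[a_m,m]$ with $a_c \le m$ (your $k$). First, the upper part of the staircase with left endpoints $a_j > m$ is collapsed to a single root $[c,j_0]$ exactly as in the proof of Lemma~\ref{pure_staircase}; this clears out all endpoints strictly between $m$ and $c$. Then the obstructing root $[a_m,m]$ is extended \emph{rightward} by repeated basic moves (each is legal because $m{+}1,\dots,c{-}1$ are now not endpoints) to $[a_m,c-1]$. The result is a new right staircase beginning at index $c-1$, i.e., with a strictly smaller $c$. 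Iterating, either the staircase becomes pure at some stage, or $c$ drops to $1$, forcing $[1]\in\ideal_{min}$. Your second paragraph (inducting on the two subsystems flanking a simple minimal root to prove the ``hence'' clause) is correct and is exactly what the paper does. So the gap is precisely in making the ``absorb the obstruction'' step a verified sequence of basic moves with a genuinely decreasing measure; the paper's choice of measure ($c$ itself) avoids the bookkeeping you anticipated.
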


\begin{proof}
	By Lemma \ref{staircase_exists}, we can assume that $\ideal$ contains a right staircase.
 	Next, we prove that if $\ideal$ contains a 
 	right staircase $S=\{[a_i, i] \ | \ c \leq i \leq d \}$
 	that is not pure, then $\ideal$  is equivalent 
 	to an ideal with a right staircase with smaller value of $c$.
 	Then the proposition will follow:  either we encounter a right staricase that is pure and we can apply Lemma \ref{pure_staircase}, or we
 	eventually arrive at a staircase with $c=1$, which means $a_c=c=1$; that is, $[1]\in \ideal_{min}$.
	
	Therefore, assume that $S$ is not pure.  Then there exists a largest $m<c$
	with $a_c \leq m$ so that $[l,m] \in \ideal_{min}$. 
 	Define	$L = \{j \ | \ m < a_j < c\}$.
 	If $L$ is not empty, let $j_0$ be its smallest element.  
 	Then we can apply 
 	the argument from Lemma \ref{pure_staircase} to the right staircase
 	$S'=\{[a_i, i] \ | \ j_0 \leq i \leq d \}$ to replace it with $[c,j_0]$
 	arriving at an equivalent ideal 
  	containing	$[a_m, m]$ and no endpoints strictly between 
 	$m$ and $c$.   This statement is also trivially true if $L$ is empty.
 	
 	Now, if $m=c-1$, we are done; otherwise $\ideal$ is stable under $m+1$.
 	And so we can replace $[a_m,m]$
 	by an equivalent ideal containing $[a_m,m+1]$ which has no endpoints 
 	strictly between $m+1$ and $c$.   It follows by induction on 
 	$c-m$ that $\ideal$ is equivalent to an ideal with a right staircase
 	that begins with $[a_m,c-1]$ and ends with either $[c,j_0]$ ($L$ not empty]) 
 	or $[a_d, d]$ ($L$ empty).   This completes the proof of the first statement.
 	
 	Now, given $I \in \ideal$, it is equivalent to an ideal
 	with minimal simple root $\al_i$ for some $i$.
 	The other minimal roots for $\ideal$ must all
 	come from one of the two irreducible
 	root subsystems generated by the remaining simple roots,
 	which are of type $A_{i-1} $ and $A_{n-i}$.
 	By induction on $n$ we can assume the theorem in 
 	types $A_{i-1}$ and $A_{n-i}$, and hence the ideal $\ideal$
 	is equivalent to one whose minimal roots are all simple.
\end{proof}

%Recall that two parabolic subgroups $P,P'$ are called {\it associated} if there Levi subgroups are conjugate by $G$. 
%Assume that $P,P'$ contain  $B$ (that is, they are standard parabolics), then $P$ and $P'$ are determined by the subsets $J$ and $J'$ of simple roots attached their Levi subgroup (determined by $B$ and $T$).  
%In that case, $P$ and $P'$ are associated if and only if $J$ and $J'$ are in the same $W$-orbit.  

%Now such a parabolic is specified by an ordered subset $\{i_1, \dots , i_k\}$ of $\{1, \dots, n\}$ specifying the 
%simple roots $\al_{i_j} \in \nilrad_P$.   To this set, we can associate the composition $(i_1, i_2-i_1, \dots, i_k - i_{k-1}, n+1-i_k)$
%and hence the partition $\lambda_P$ which is the unique partition of $n+1$ 
%obtained by ordering the numbers in the composition.

\begin{lem}\label{associated_parabolics}
	For standard parabolic $P, P'$, we have $\nilrad_P \sim \nilrad_P'$ 
	if and only if  $\mu_P = \mu_P'$. 
\end{lem}

\begin{proof}
	Let the minimal roots of $\nilrad_P$ be $[j_1], \dots ,[j_l]$.   Set $j_0=0$ and $j_{l+1} = n+1$. 
	Let $1 \leq b \leq l$.
	Then Corollary \ref{grassmann_switch}  applies to each subsystem 
	%of rank $j_{b+2}-j_b-1$ 
	with simple roots $\al_i$ for $j_{b-1}+1 \leq i \leq j_{b+1}-1$.  
	Namely, It implies that $\nilrad_P$ is  equivalent to $\nilrad_{P'}$ 
	with the same minimal roots except that $[j_{b}]$ is replaced with $[j_{b-1} + j_{b+1} -j_{b}]$.   
The composition for $\nilrad_P'$ is the same as the one for 
$\nilrad_{P}$ except that the $j_{b+1}-j_b$ and $j_{b}-j_{b-1}$ exchange (adjacent) positions.
% We can think of the action as exchaning the positions of the  $GL_{i_{j+1}-i_j}$
%	and $GL_{i_{j+2}-i_{j+1}}$ factors in the corresponding Levi subgroups.  
	Thus this action acts like the simple transposition $(b, b+1)$ on the $l+1$ elements of the composition, generating the symmetric $S_{l+1}$.  This shows all rearrangements of the composition yield equivalent ideals and therefore that  if $\mu_P = \mu_P'$, then $\nilrad_P \sim \nilrad_{P'}$.   Together with the Proposition \ref{has_a_simple}, we know that the number of equivalence classes is at most $n+1$.
	
	On the other hand, the number of nilpotent orbits equals the number of partitions of $n+1$ and every orbit arises as $\orbit_I$ for some $I \in \ideals$ (see the introduction).  Hence the number of equivalence classes is at least the number of partitions of $n+1$.   We conclude the number of classes is exactly the number of partitions of $n+1$.  Hence the converse statement is also true: distinct partitions correspond to distinct equivalence classes. (We will recall in 
	Section \ref{algorithm} that the Jordan type of $\orbit_{\nilrad_P}$ is the dual partition of $\mu_P$).
\end{proof}

The proof of Theorem \ref{thmA} is now complete by the proof of Lemma \ref{associated_parabolics}:  the equivalence classes under the basic move coincide with the equivalence classes under associated nilpotent orbit.

\section{Two applications}
\label{applications}

\subsection{Vanishing cohomology}

For a rational representation $V$ of $B$, denote by 
$H^*(G/B, V)$, the cohomology of the sheaf of sections of the vector bundle
$G \times^B V$ over $G/B$.   Let $V^*$ denote the linear dual of $V$ and
$S^j V^*$ the $j$-th symmetric power of $V^*$, which are all $B$-modules.

For each parabolic subgroup $P$ containing $B$, 
it is known in all Lie types that $H^i(G/B, S^j \nilrad_P^*)=0$ 
for all $j \geq 0$ and $i>0$ (see \cite[Chapter 8]{jantzen}).
By Proposition 4.3 in \cite{achar-sommers:local}, if $I$ and $J$ are related by the basic move, 
then 
\begin{equation}
H^i(G/B, S^jI^*)= H^i(G/B, S^j J^*) \text{ for all } i, j \geq 0.
\end{equation}
Hence a corollary of Theorem \ref{thmA}
is the following.
\begin{cor} \label{higher_vanishing}
	In type $A_n$, every $B$-stable ideal $I$ of $\nilrad$ 
	satisfies
	$H^i(G/B, S^j I^*)=0$ for all $j \geq 0$ and $i>0$.
\end{cor}

We expect this to hold in all types and it has been checked for most ideals in the exceptional groups \cite{fenn}.
The vanishing result is already known in all types for the ideal $\g_{\geq 2}$ 
attached to each orbit via the Jacobson-Morozov theorem \cite{hinich:van}, \cite{panyushev:van}, so 
the corollary gives a new proof for $\g_{\geq 2}$ in type $A$.
The vanishing also implies formulas for the $G$-module of graded functions on $\orbit_I$.  
Let $\ideal^c := \ro^+ \backslash \ideal$ be the lower order ideal.  Then 
for any $I$ in type $A$, the ungraded functions on $\orbit_I$ satisfy
$$R(\orbit_I) = \Ind_T^G \left( \prod_{\alpha \in \ideal^c} (e^0-e^\alpha) \right)$$
in the notation of Corollary 3.2 in \cite{mcgovern}.  A graded formula is obtained by
the Kostant multiplicity formula using Lusztig's q-analog of weight multiplicity, but 
where the positive roots allowed are drawn only from $\ideal$ (see \cite[Chapter 8]{jantzen}).

\subsection{Cells in the affine Weyl group}

Shi \cite{shi:book} computed the left cells in the affine Weyl group
of type $A_n$. 
The left cells, viewed geometrically, are unions of regions of the Shi arrangement.  
In the dominant chamber, the regions of the Shi arrangement are indexed by positive sign types
and these are in bijection with $\ideals$.   For $I \in \ideals$, let $R_I$ be the associated region.
Then Shi has two results: (1) for $I\in \ideals$, the region $R_I$ lies in a left cell;  (2) 
if $I$ and $J$ are related by a certain equivalence relation \cite[p. 103]{shi:book}, then they lie in the same left cell.
%that the regions $R_I$ and $R_J$ indexed by $I,J\in\ideal$ are in the same left
%cell if $\orbit_I = \orbit_J$.  Lusztig completed the only if part.  

Assume Shi's first result above.  Then Fang showed  \cite[Theorem 4.3]{fang:equivalence} that if $I$ and $J$ are related by the basic move, 
then $R_I$ and $R_J$ lie in the same left cell.  
Hence Theorem \ref{thmA} implies

\begin{cor}
	In type $A_n$, the regions $R_I$ and $R_J$ lie in the same left cell whenever $\orbit_I = \orbit_J$.  Hence, Shi's equivalence relation 
	for the positive sign types coincides with the one generated by the basic move.
\end{cor}

In fact, the partition $\lambda_I$ is the same as the partition attached by Lusztig to the two-sided cell that contains $R_I$ \cite{Lusztig:square}.
So the corollary is a variant of the intepretation of the partition in terms of nilpotent orbits for the loop group by Lawton \cite{lawton}.
Fang's result is based on Lusztig's star action, which is analogous to Knuth's relation on $S_n$. 

%\begin{rmk}
%This result was contained in an earlier draft of the paper from 2003.
%\end{rmk}

\section{Combinatorics}
\label{combinatorics}

\subsection{Gerstenhaber's algorithm}\label{algorithm}

For a partition $\mu$ of $n+1$, let $\orbit_\mu$ 
denote the conjugacy class of nilpotent matrices with Jordan blocks of sizes equal to the parts of $\mu$.
If $x \in \orbit_\mu$, we write $\lambda(x)$ for $\mu$.  Define $\lambda(\orbit_I):= \lambda(x)$ for any $x \in \orbit_I$.
Recall that $\orbit_\mu \geq \orbit_{\nu}$ if and only if $\mu \geq \nu$ in the dominance order on partitions.

Given $I \in \ideals$, there is an algorithm to compute $\lambda(\orbit_I)$ due to Gerstenhaber \cite[p. 535]{gerstenhaber}.
The algorithm produces $k$ disjoint, ordered subsets $S_1, \dots, S_k$ of $\{1, 2, \dots, n+1\}$, called the characteristic sequences of $I$.  
Each sequence is defined inductively. The first element of $S_1$ is $1$.   Then if $i \in S_1$, its successor $j$ is defined to be the smallest 
integer such that $t_{ij} \in I$, or equivalently, $\vecdiff{i}{j} \in \ideal.$  
 If no $j$ exists, the sequence terminates.  
Once $S_1, \dots, S_{r}$ are defined, for any matrix we can cross out the rows and columns indexed by the elements in $\cup_{i=1}^r S_i$.  
This defines a map
$p: \mathfrak{gl}_{n+1} \to \mathfrak{gl}_{s}$, where $s = n+1- \# \cup_{i=1}^r S_i$, which takes ideals to ideals.   
Then $S_{r+1}$ is defined as $S_1$ was, but keeping the original labelling of the rows and columns.   
The ideal property of $\ideal$ ensures that $|S_i| \geq |S_{i+1}|$ and Gerstenhaber's partition of $n+1$ is 
defined as $\lambda_I:= (|S_1|, |S_2],\dots)$. 
For each characteristic sequence $S_j = \{ i_1, \dots, i_r\}$, let 
$x_j = t_{i_1, i_2} + t_{i_2, i_3} +\dots + t_{i_{r-1}, i_{r}}$. 
Then the nilpotent matrix 
\begin{equation}
\label{element}
x := \sum_{j=1}^k  x_j \in I, 
\end{equation}
and clearly $\lambda_I = \lambda(x)$ by construction.  
Gerstenhaber proved \cite[Theorem 1]{gerstenhaber} that $x \in \orbit_I$, showing that  
$\lambda(\orbit_I)$ coincides with $\lambda_I$.  We can give a new proof of this result using Theorem \ref{thmA}.

\begin{prop}\label{prop:invariance}
The partition $\lambda_I$ is unchanged under basic moves.
\end{prop}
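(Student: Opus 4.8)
The plan is to show that a single basic move does not change Gerstenhaber's partition $\lambda_I$; transitivity of the relation then gives the result, but in fact only the single-step invariance is needed. Recall that a basic move replaces $\ideal$ by $\idealJ = \ideal \setminus \{[a,b]\}$ where $[a,b] \in \ideal_{min}$ and $\ideal$ is $(a\!-\!1)$-stable or $(b\!+\!1)$-stable (Definition \ref{Abasicmove}). I would treat the two cases symmetrically (transpose-conjugation of the situation, which reverses the order on $\{1,\dots,n+1\}$, swaps them), so it suffices to handle, say, the $(b\!+\!1)$-stable case. The key observation is that the element $x \in I$ built from the characteristic sequences in \eqref{element} is \emph{constructed greedily}: when running the successor rule for $\ideal$, at position $i$ one takes the smallest $j$ with $[i,j] \in \ideal$. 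So the only way removing the root $[a,b]$ can affect the characteristic sequences is if at some step the greedy rule for $\ideal$ chose $j=b$ starting from $i=a$, i.e.\ $[a,b]$ was actually \emph{used} as a successor edge; if it is never used, the sequences $S_1,\dots,S_k$ for $\idealJ$ are literally the same as for $\ideal$ and there is nothing to prove.

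So the substantive case is: in building the sequences for $\ideal$, at some stage the current row index is $a$, every $[a,j']$ with $j' < b$ has already been excluded (its column $j'$ lies in an earlier $S_i$ or was used earlier in the current sequence), and $[a,b] \in \ideal_{min}$ so $b$ is the chosen successor. First I would record what $(b\!+\!1)$-stability and $[a,b] \in \ideal_{min}$ force combinatorially: since $b$ is not a left endpoint of $\ideal$, the root $[b, c]$ for various $c$ never appears as a minimal root with left endpoint exactly $b$; and since $[a,b]$ is minimal, $[a, b\!-\!1] \notin \ideal$ and $[a\!+\!1, b] \notin \ideal$. Combined with $(b\!+\!1)$-stability ($b$ is not a right endpoint either), this pins down which roots $[i,j]$ with $j$ near $b$ lie in $\ideal$. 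The idea then is that in $\idealJ$ the greedy rule, upon reaching row $a$ with columns $<b$ exhausted, now skips past $b$ to some $b' > b$, and one checks that the resulting sequences are a permutation/re-bracketing of the old ones with the \emph{same multiset of lengths}. Concretely, I expect the picture to be: the sequence that in $\ideal$ read $\dots \to a \to b \to \dots$ now reads $\dots \to a \to (\text{something that used to be reached from } b)$, while the "thread" that previously passed through $b$ from the left gets rerouted, and $(b\!+\!1)$-stability is exactly what guarantees this rerouting is clean — no column gets used twice, none gets orphaned.

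The main obstacle, and where I would spend the most care, is proving that the lengths $|S_1| \ge |S_2| \ge \cdots$ are preserved rather than just that the total $n+1$ is preserved. A bijective/surgery argument on the underlying directed graph (vertices $\{1,\dots,n+1\}$, a chosen edge $i \to \text{succ}(i)$) is the natural tool: the characteristic sequences are exactly the paths of a functional graph, so I want to exhibit an explicit "edge swap" taking the edge set for $\ideal$ to the edge set for $\idealJ$ that changes the path decomposition only by a local modification of two paths near $b$ — either merging-then-resplitting two paths, or rerouting within one — in a way that visibly preserves the multiset of path lengths. The delicate point is that the greedy construction for $\idealJ$ is \emph{not} a priori a local perturbation of the greedy construction for $\ideal$: removing one root could in principle cascade. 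Ruling out a cascade is where the hypotheses $[a,b]\in\ideal_{min}$ (so removing it only opens up row $a$, and only at column $b$) and $(b\!+\!1)$-stability (so the disruption cannot propagate through $b$ as a right endpoint) must be used decisively. Once the cascade is excluded and the surgery is shown to be local, matching the length multisets is a finite check. An alternative, possibly cleaner, route I would keep in reserve: use that $\lambda_I$ equals the Jordan type of $\orbit_I$ — but that is precisely Gerstenhaber's theorem we are trying to reprove, so to avoid circularity I would instead verify directly from the greedy algorithm, as above, that $x \in I$ for $\idealJ$ has the same Jordan type as the corresponding element for $\ideal$, perhaps by exhibiting an explicit conjugating element or by a rank computation on powers.
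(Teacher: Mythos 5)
The central difficulty you correctly isolate — showing that the greedy construction for $\idealJ$ is a controlled local perturbation of the one for $\ideal$ rather than a cascade — is exactly the content of the paper's proof, but your plan leaves that step open, and one of your reductions is not sound. Specifically, you propose to treat the $(a\!-\!1)$-stable and $(b\!+\!1)$-stable cases as transpose-conjugate mirror images of each other. That symmetry fails for Gerstenhaber's algorithm, because the algorithm is inherently asymmetric: it seeds each sequence with the \emph{smallest} unused row index and greedily selects the \emph{smallest} admissible successor. Transposing and reversing the index set does send ideals to ideals, but it converts ``smallest-first'' into ``largest-first,'' which is a different algorithm; the equality of the two resulting partitions is a nontrivial fact (essentially Gerstenhaber's theorem, which this proposition is being used to reprove) and cannot be assumed. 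The paper's proof bears this out: the two cases behave quite differently. In the $(a\!-\!1)$-stable case one shows the root $[a,b]$ is \emph{never} chosen as a successor edge in any sequence (so the sequences for $I$ and $J$ are literally identical), while in the $(b\!+\!1)$-stable case the root \emph{is} used in, say, the $r$-th sequence, and removing it causes $b\!+\!1$ to take the place of $b$ there (with no change in length), after which one argues the truncated ideals seen by the algorithm from stage $r\!+\!1$ onward agree up to relabeling $b \leftrightarrow b\!+\!1$, using $(b\!+\!1)$-stability. These two conclusions — ``the edge is never used'' versus ``the edge is used, and $b$ is replaced by $b\!+\!1$'' — are not images of one another under any duality of the setup.

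Your picture of the substantive case as ``merging-then-resplitting two paths'' also does not match what actually happens: only one path is modified (its vertex $b$ becomes $b\!+\!1$, preserving its length), and the effect on the remaining sequences is a pure relabeling, not a structural rerouting. Finally, you explicitly flag ``ruling out a cascade'' as the main obstacle and say you ``would spend the most care'' there, but you do not supply the argument. That argument is the heart of the matter: one must show (i) $b\!+\!1$ cannot have been consumed by an earlier sequence (so it is still available to replace $b$ at stage $r$), (ii) $b\!+\!1$ cannot lie in the same sequence as $b$ (using $[b,b] \notin \ideal$ from $s_b$-stability), and (iii) after crossing out the first $r$ sequences, the residual ideals for $I$ and $J$ coincide after swapping the labels $b$ and $b\!+\!1$. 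Without establishing (i)--(iii) the claim that the surgery is local and length-preserving remains an expectation, not a proof.
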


\begin{proof}
	Suppose $I$ and $J$ are related by the basic move, where the weight space of $e_i-e_j$ 
	is dropped from $I$ to yield $J$. 
	We need to show $\lambda_I = \lambda_{J} $.
	This is clear if $e_i-e_j$ is not selected in any step of the algorithm producing  $\lambda_I$.
	If it is selected, i.e., $j$ follows $i$ in some characteristic sequence 
	for $I$, then there are two cases:  when $I$ is $(i-1)$-stable and when $I$ is $j$-stable.   

		Case 1:  $\ideal$ is stable under the action of $s_{i-1} = (i-1,i)$.  In particular $i\geq2$.  
		We will show that actually $j$ cannot follow $i$ in any characteristic sequence. % if $\vecdiff{i}{j} \in \ideal$ is minimal.
		First, $\vecdiff{i-1}{b} \not\in \ideal$ for any $i-1<b<j$.  For $b=i$, this is clear since $\ideal$
		is $s_{i-1}$ stable.  For $i<b<j$, we would get $s_{i-1}(\vecdiff{i-1}{b} e)) = \vecdiff{i}{b} \in \ideal$.  
		But $\vecdiff{i}{j}$ is a minimal root of $\ideal$, so this cannot happen.  
		Second, $i-1$ cannot be selected in an earlier 
		characteristic sequence than $i$ for $I$.  
		Otherwise, $j$ would be available to follow $i-1$ at the earlier step since $\vecdiff{i-1}{j} \in \ideal$; hence $j$ will be chosen since it is the minimal such value by the first step.  This contradicts that $j$ follows $i$ in a sequence.   
		Next, $i$ cannot begin a characteristic sequence since $i-1$ would be available to start the sequence by the second step, 
		and thus it must be chosen as the first element, ahead of $i$.   
		Finally, if $i$ is part of a sequence that includes $a,i,j$, then $a \neq i-1$ as before (by $s_{i-1}$ stability of $\ideal$).  
		While if $a<i-1$, then $\vecdiff{a}{i} \in \ideal$ implies 
		$s_{i-1}(\vecdiff{a}{i}) = \vecdiff{a}{i-1}  \in \ideal$.  
		Hence, $i-1$ would have followed $a$ since $i-1$ was available to be chosen (intead of $i$). 
		Thus, stability of $\ideal$ under $s_i$ and $\vecdiff{i}{j}$ minimal in $\ideal$ means $j$ cannot follow $i$ in any sequence. 
		We conclude that the characteristic sequences for $I$ and $J$ are identical.

    Case 2: $\ideal$ is stable under $s_j = (j, j+1)$.  
    Let $j$ follow $i$ in the $r$-th sequence for $I$.  
    Suppose $j+1$ appears in an earlier sequence for $I$.  First, 
    it cannot start that sequence since $i<j+1$ and $i$ being available means 
    $i$ or a smaller number would be selected to start the sequence.
    
    Next, if $j+1$ follows some $a$, then $\vecdiff{a}{j+1} \in \ideal$ and then 
    also $\vecdiff{a}{j} \in \ideal$ by $s_j$-stability 
    ($a \neq j$ since $j$ occurs in a later sequence).  
    But $\vecdiff{i}{j}$ is minimal in $\ideal$, so $a<i$. 
    But then $j$ would have followed $a$ in this earlier sequence, instead of $j+1$, a contradiction.  
    We conclude that $j+1$ must appear in a later sequence for $I$ (it cannot occur in the same sequence as $j$
     since $\vecdiff{j}{j+1} \not\in \ideal$ by $s_j$-stability).
    
    Now, since $\vecdiff{i}{j}$ is omitted from $\ideal$ and $j+1$ is still unchosen at the $r$-th step, 
    the $r$-th sequence for $J$  must have $j+1$ following $i$ (note that $\vecdiff{i}{j+1} \in \ideal$ and hence also in $\idealJ$).
  	If some number, say $b$, follows $j$ in the algorithm for $I$, then $\vecdiff{j}{b} \in \ideal$.  And then 
  	$s_j$-stability means $\vecdiff{j+1}{b}$  is in both $\ideal$ and $\idealJ$.  Hence, $s$ follows $j+1$ in the $r$-th sequence for $J$.   
  	This shows that the $r$-th part for $\lambda_I$ and $\lambda_J$ are the same. Also the earlier sequences are identical.

The remainder of $\lambda_I$ and $\lambda_J$ are determined by ideals 
in $ \mathfrak{gl}_{n+1-s}$ under $p$.  
We claim they are the same ideal and that the labeling is the same except that the $j$ label for $p(J)$
becomes the $j+1$ label for $p(I)$.   Indeed, $\vecdiff{a}{b} \in p(J)$ means $\vecdiff{a}{b} \in J$ and $a$ and $b$ are not equal to $j+1$. 
Hence, the indices in $s_j(\vecdiff{a}{b})$ avoid $j$.  Since $I$ is $s_j$-stable, that means $s_j(\vecdiff{a}{b}) \in I$ and then also in $p(I)$.
The converse is similar, concluding the proof that $\lambda_I  = \lambda_J$ if $I \sim J$.
\end{proof}

\begin{cor}[Theorem 1 in \cite{gerstenhaber}]
\label{cor:parabolic_case}
 We have $\lambda(\orbit_I) = \lambda_I$.
\end{cor}
 
 \begin{proof}
By Proposition \ref{thmA} and Lemma \ref{associated_parabolics} and the previous proposition, 
this reduces to computing 
both $\lambda_I$ and $\lambda(\orbit_I)$ 
for $I:= \nilrad_P$ where 
$P = P_{j_1, j_2, \dots, j_l}$ is 
such that the composition $c_i$ in \eqref{composition} is already a partition. 
Let $\mu:= (j_1, j_2-j_1, \dots, j_l - j_{l-1}, n+1- j_l)$ be this partition.

Recall the dual partition $\mu^*$ of $\mu$ is defined by $\mu^*_j = \#\{ i \, |  \, \mu_i \geq j\}$.
So $\mu^*_1 = l+1$.
It is easy to see that $\lambda_I = \mu^*$  (as shown in \cite[Proposition 14]{gerstenhaber}). 
Namtely, set $j_0=0$.   Then the first characteristic sequence is 
$(j_0+1, j_1+1, j_2+1, \dots, j_l+1)$, 
the second is $(j_0+2, j_1+2, \dots, j_{\mu^*_2-1}+2)$, and so on.
The $i$-th sequence has length $\mu^*_i$, so that $\lambda_I = \mu^*$.

Next, the matrix $x \in I$ in \eqref{element} has
$\lambda(x) = \lambda_I$ by construction.  
Hence $x \in \orbit_{\mu^*}$.
The dimension of $\orbit_{\mu^*}$ is $(n+1)^2- \sum \mu_i^2$ \cite{carter}.  
Then by Richardson, $\dim(\orbit_{I}) = 2 \dim (\nilrad_P)$ \cite{carter}
and the latter equals
$\dim GL_{n+1} - \dim (L)$ where $L \simeq \prod GL_{\mu_i}$.  
Hence, $\dim(\orbit_{I}) = \dim(\orbit_{\mu^*})$,  
so it must be that $\orbit_{I} = \orbit_{\mu^*}$.  Therefore,
$\lambda(\orbit_I) = \mu^* = \lambda_I$.
 \end{proof}

\subsection{Two coarser equivalence relations}

First, we recall the bijections among $\ideals$, lattice paths, Dyck paths, and ballot sequences.   
Let $\{ b_i \}_{ i=1}^{2n}$ be a sequence consisting of zeros and ones.
The height $$h_j:=\sum_{i=1}^j  (-1)^{b_i+1} $$
%$$h_j:= 2\sum_{i=1}^j b_i - j$$
 of the sequence at index $j$ is the number of $1$'s minus the number of $0$'s  
in the subsequence $\{ b_i \}_{ i=1}^{j}$.
A binary sequence $\{ b_i\}_{ i=1}^{2n}$ is called a ballot sequence of length $2n$ if there are
$n$ zeros and $n$ ones and $h_j \geq 0$ for all $j$.
The {\it maximum height} of the ballot sequence is the largest value of $h_j$.

The ballot sequences of length $2n$ give rise 
to a lattice path in the plane as follows:  starting at $(0,n)$ the path moves a unit step east at time $i$ if $b_i=1$ 
and a unit step south if $b_i=0$. Since there are $n$ ones and $n$ zeros, the path terminates at $(n,0)$.  Since $h_j\geq 0$ for all $j$, the path stays
at or above the line joining $(0,n)$ and $(n,0)$.  
This is a bijection between ballot sequences and such lattice paths.  Such
lattice paths are then in bijection with the ideals in $\mathfrak{gl}_{n}$.  The corresponding ideal $I$ is the one whose border is this 
lattice path where the lower left corner of a matrix is at $(0,0)$; namely, where $t_{ij} \in I$ if and only if the point $(n-i,j)$ 
lies on or northeast of the lattice path.  Finally, these lattice paths are clearly in bijection with the Dyck paths in the plane, 
which start at $(0,0)$ and move by $(1,1)$ is $b_i=1$ and by $(1,-1)$ if $b_i=0$ and stay above the $x$-axis.  Then the notion of maximum height 
is just the $y$-coordinate of the largest peak in the Dyck path.

%\bigskip

The first characteristic sequence $(i_1= 1, i_2, \dots, i_{r})$ of $I$ 
was subsequently re-introduced in the literature \cite{andrews-et-al:nilpotence} and then 
adapted to Dyck paths \cite{haglund}, where it determines the {\it bounce path}.
Visually, this is the lattice path that traces the lower 
boundary of the ideal $I:=\nilrad_P$ where $P = P_{i_2-1, \dots, i_{r}-1}$.  
The value $(\lambda_I)_1-1 = r-1$ is called the {\it bounce count} or bounce number of the path 
since this is the number of times the path touches the diagonal (strictly inside
the matrix); namely, it touches at the points $(i_2-1, i_2-1), \dots, (i_r-1, i_r-1)$.

%The bounce count and the max height statistics are interesting statistics.  Both showed up in \cite{andrews-et-al:nilpotence}.  bounce is acutally due to Gerstenhaber.

Next, we break the basic move down into two coarser moves.
%What if we write $(ij)$ for $e_i-e_j$.
\begin{defn} 
	Let $[i,j-1]=\vecdiff{i}{j}$ be a minimal root of $\ideal$.   
	
	If either 
	\begin{enumerate}
	\item $i \geq 2$ and $e_{a}-e_i$ is not a minimal root for all $a<i$,  or 
	\item $j\leq n$ and $e_{j} -e_b$  is not a minimal root for all $b>j$,
	\end{enumerate}
	then we say $\ideal$ and $\ideal \backslash \{ \vecdiff{i}{j} \}$ are equivalent under the {\it inner move}.
	
%	(first condition means $j$ can't follow $i$ in the first seuqence; it cna't start since $i>1$ and it cna't come later by proof)
%(only interested in first sequence, we know $j+1$ of course comes in a later sesquence; and proof goes through)
	
	If either 
	\begin{enumerate}
	\item $i \geq 2$ and $e_{i-1}-e_a$ is not a minimal root for all $a>i-1$,  or
	\item $j\leq n$ and $e_{b} -e_{j+1}$  is not a minimal root for all $b<j+1$,
	\end{enumerate}
	then we say $\ideal$ and $\ideal \backslash \{ \vecdiff{i}{j} \}$ are equivalent under the {\it outer move}.

%	If $i \geq 2$ and  $i-1$ is not a left endpoint of any minimal root OR $j\leq n-1$ and $j+1$ is not a right endpoint,
%	then we say $\ideal$ and $\ideal \backslash \{[i,j]\}$ are equivalent under the second submove.
\end{defn}

 \begin{prop}
 	If $I$ and $J$ are equivalent under the inner move, then $(\lambda_I)_1 = (\lambda_J)_1$.  
	In the language of Dyck paths, the bounce count is invariant under the inner move.
\end{prop}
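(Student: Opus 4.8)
The plan is to argue directly with the first characteristic sequence. Write $S_1(\ideal)=(m_1,m_2,\dots,m_p)$ for the first characteristic sequence of $\ideal$, so that $(\lambda_I)_1=p$. Since a successor is always strictly larger than its predecessor, $S_1(\ideal)$ is strictly increasing, and in building it no "crossing out" ever intervenes: the successor of $m_s$ is simply the least $k$ with $e_{m_s}-e_k\in\ideal$. By symmetry and transitivity it suffices to treat one inner move $\ideal\rightsquigarrow\idealJ:=\ideal\backslash\{\beta\}$, where $\beta=e_i-e_j$ is a minimal root of $\ideal$ and one of the two conditions in the definition holds. If $i\notin S_1(\ideal)$, then $\beta$ is never used in building $S_1(\ideal)$ and an immediate induction gives $S_1(\idealJ)=S_1(\ideal)$, so we may assume $i\in S_1(\ideal)$, say $i=m_t$. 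Minimality of $\beta$ forces $e_i-e_k\notin\ideal$ for $i<k<j$, so $m_t=i$ has successor $m_{t+1}=j$; in particular $t<p$.

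The first main step is: \emph{condition (1) of the inner move is incompatible with $i\in S_1(\ideal)$}. Condition (1) says $i\ge 2$ and $i-1$ is not a right endpoint of $\ideal$. Since $i\ge 2$, the term $i=m_t$ has a predecessor $a:=m_{t-1}<i$; being $a$'s successor, $e_a-e_i\in\ideal$ while $e_a-e_k\notin\ideal$ for $a<k<i$. Now $e_a-e_i=[a,i-1]$ has right endpoint $i-1$, which by hypothesis is not a right endpoint of $\ideal$, so $[a,i-1]$ is not minimal; choose a minimal root $[a',b']\prec[a,i-1]$. Then $a\le a'\le b'\le i-1$, and $b'\ne i-1$ (else $i-1$ would be a right endpoint), so $a<b'+1<i$. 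But $e_a-e_{b'+1}\succeq e_{a'}-e_{b'+1}=[a',b']\in\ideal$, hence $e_a-e_{b'+1}\in\ideal$ — contradicting that $a$'s successor is $i$. Therefore, when $i\in S_1(\ideal)$, only condition (2) can hold: $j\le n$ and $j$ is not a left endpoint of $\ideal$.

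The second main step treats this remaining case. Record two consequences of "$j$ is not a left endpoint": (a) $\alpha_j=e_j-e_{j+1}\notin\ideal$, since a simple root lying in $\ideal$ is automatically minimal; and (b) for every $b$ with $j+1<b\le n+1$, one has $e_j-e_b\in\ideal$ if and only if $e_{j+1}-e_b\in\ideal$ — the forward implication because a minimal root $\preceq[j,b-1]$ cannot have left endpoint $j$, hence has left endpoint $\ge j+1$ and so lies $\preceq[j+1,b-1]$. Now compare $S_1(\ideal)$ and $S_1(\idealJ)$: an easy induction shows they agree up through $m_t=i$; in $\idealJ$ the successor of $i$ is $j+1$ (minimality of $\beta$ again, using $j\le n$); and since every term of either sequence from position $t+2$ onward exceeds $j$ (hence exceeds $i$), statements (a) and (b) show that the successor of $j$ in $\ideal$ equals the successor of $j+1$ in $\idealJ$ (or neither exists), and thereafter the two sequences coincide. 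So $S_1(\idealJ)$ is obtained from $S_1(\ideal)$ simply by replacing its $(t+1)$-st entry $j$ by $j+1$; in particular its length is still $p$, proving $(\lambda_I)_1=(\lambda_J)_1$. The Dyck-path reformulation is then immediate, since the bounce count of the path is $(\lambda_I)_1-1$.

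I expect the essential difficulty to be the first main step — ruling out that $i$ appears on the first characteristic sequence under condition (1). This is exactly the place where "$i-1$ is not a right endpoint" must be used twice (once to conclude $[a,i-1]$ is non-minimal, once to force the minimal root below it to have right endpoint $<i-1$), and it is what excludes the potentially fatal scenario in which dropping a root $e_i-e_{n+1}$ would shorten $S_1$. Everything else is bookkeeping with the increasing sequence $S_1$ and the partial order on roots.
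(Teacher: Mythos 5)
Your proof is correct and follows essentially the same approach as the paper: analyze the first characteristic sequence, show that under condition (1) the element $i$ cannot lie on $S_1$ at all (so $S_1$ is untouched), and that under condition (2) the only change is replacing $j$ by $j+1$, leaving the length unchanged. The paper delegates the condition (2) case to ``part of Case 2'' of Proposition~\ref{prop:invariance}, whereas you spell it out self-containedly via your observations (a) and (b), but the content is the same.
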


\begin{proof}
	Assume we are omitting  $e_i-e_j$ from $\ideal$ by the inner move.
	We need only be concerned if $e_i-e_j$ is selected in the constructing the first characteristic sequence for $I$.  
	
	If the first condition of the inner move applies, then $i \geq 2$ so $i$ does not begin the sequence.   
	If $(a, i, j)$ appears in the first sequence, then $e_a-e_i \in \ideal$.   But this root cannot be minimal, so let $e_b - e_c$
	be a minimal root below it in the partial order, i.e., $b\geq a$ and $c\leq i$.  
	If $c<i$, then $e_a-e_{i-1}\in \ideal$, so $i-1$ would be chosen to follow $a$, instead of $i$, in the algorithm since $i-1<i$.   Hence, $c=i$, which contradicts the condition on minimal roots.  
	We conclude $j$ cannot follow $i$ in the first characteristic sequence. 
	
	If the second condition of the inner move applies, the proof is same as part of Case 2 in the Proposition \ref{prop:invariance}.
	First, $j+1$ cannot be selected in the first sequence; otherwise, $\vecdiff{j}{j+1} \in I$, hence minimal, which is not allowed.
	Thus, $j+1$ follow $i$ in the first sequence of $J$.  Finally, if $b$ follows $j$ in the first sequence, then as in the proof of the proposition,
	$b$ follows $j+1$ in the first sequence of $J$ since the argument only used that $\vecdiff{j}{b} \not\in \ideal_{min}$.
\end{proof}

\begin{prop} \label{max_height}
	%Translating the set of ideals into ballot sequences, 
	The number of parts of $\lambda_I$ is the maximum height 
	of the associated ballot sequence (or Dyck path).
\end{prop}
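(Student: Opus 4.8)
The plan is to relate both quantities to the combinatorics of the lattice path (equivalently, the Dyck path) associated to $I$, and to exploit the characterization of $\lambda(\orbit_I)$ as the dual partition of $\mu_P$ when $I = \nilrad_P$. The number of parts of $\lambda_I$ is $(\lambda_I^*)_1$, the largest part of the dual partition, so by Corollary \ref{cor:parabolic_case} (applied after reducing to a parabolic case) this should match the largest part of $\mu_P$ when $I$ is equivalent to $\nilrad_P$, and the largest part of $\mu_P$ is exactly the longest run of consecutive columns of the matrix that lie entirely inside $I$, i.e. the longest sequence of consecutive east steps in the lattice path description. I would try to match that directly to the maximum height of the ballot sequence.

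First I would fix the combinatorial dictionary from the last subsection: under the bijection between $\ideals$ in $\mathfrak{gl}_{n+1}$ and lattice paths, a point $(n+1-i, j)$ lies on or northeast of the path exactly when $t_{ij}\in I$. I would observe that the maximum height of the ballot sequence equals the maximal vertical distance from the path up to the line from $(0,n+1)$ to $(n+1,0)$, which translates into the largest $m$ such that some $m\times m$ lower-triangular block of strictly-below-diagonal positions all lie in $I$ — concretely, the largest $m$ for which there is an index $i$ with $t_{i,i+1}, t_{i+1,i+2}, \dots$ or more precisely with the full staircase $\{[i], [i+1], \dots, [i+m-1]\}$ (the $m$ consecutive simple roots) contained in $\ideal$. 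Such a configuration in $\ideal$ forces $\vecdiff{i}{i+m} \in \ideal$, and the longest such chain of simple roots is precisely the maximum height. So the target reduces to: the number of parts of $\lambda_I$ equals the length of the longest chain $[i],[i+1],\dots,[i+m-1]$ of simple roots contained in $\ideal$.

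Next I would compute the number of parts of $\lambda_I$ directly from Gerstenhaber's algorithm rather than via the orbit. The characteristic sequences $S_1, \dots, S_k$ partition $\{1,\dots,n+1\}$, and the number of parts of $\lambda_I$ is $k$. At each stage the first element of the next sequence is the smallest index not yet used. A convenient way to see $k$ is to track, for each $\ell \in \{1,\dots,n+1\}$, in which sequence $S_{r(\ell)}$ the index $\ell$ appears; then $k = \max_\ell r(\ell)$. The ideal property gives $r(\ell+1) \le r(\ell)+1$, and $r(\ell+1) = r(\ell)+1$ happens exactly when $\ell+1$ cannot be appended after anything already placed and so must start a new sequence — one checks this forces $\vecdiff{\ell'}{\ell+1}\notin\ideal$ for all relevant earlier $\ell'$, i.e. $\ell+1$ is "blocked". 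Conversely $\vecdiff{\ell}{\ell+1}=[\ell]\in\ideal$ makes $\ell+1$ available after $\ell$. Running this bookkeeping along $\ell = 1, 2, \dots$, the value $r(\ell)$ increases by at most one each step and $k$ is the maximum reached, so $k$ equals the length of the longest run of consecutive indices along which $r$ strictly increases, which I would identify with the longest chain of consecutive simple roots $[i],\dots,[i+m-1]$ all lying in $\ideal$.

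The main obstacle I expect is the last identification — carefully proving that $r(\ell+1) = r(\ell) + 1$ if and only if $[\ell]\in\ideal$ is \emph{not} quite right as stated, because $\ell+1$ can land in a later sequence than $\ell$ even when $[\ell]\notin\ideal$ (it may be appended to a different, later sequence), and it can stay in the same sequence as $\ell$ via $[\ell]\in\ideal$. So the honest statement is a one-sided bound in each direction that must be made to meet: a chain of $m$ consecutive simple roots in $\ideal$ produces a strictly increasing run of length $m$ in $r$ hence $k \ge m$; and conversely, wherever $r$ jumps to a new maximum at index $\ell+1$ one must extract, using minimality of the chosen successors in the algorithm together with the ideal property, an actual chain of consecutive simple roots of the required length sitting inside $\ideal$ just below the diagonal. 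Pinning down this extraction — essentially showing the "bounce-type" path forced by the algorithm has a flat stretch of simple roots exactly as long as the tallest peak — is the crux; once it is in hand, combining it with Corollary \ref{cor:parabolic_case} (to confirm $k$ equals the largest part of $\mu_P$ in the parabolic case, consistent with the path picture) and with the lattice-path description of maximum height finishes the proof.
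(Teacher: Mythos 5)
Your proposed reduction hinges on identifying the maximum height of the ballot sequence with the length $m$ of the longest run of consecutive simple roots $[i],[i+1],\ldots,[i+m-1]$ lying in $\ideal$ (equivalently, with the largest triangular block inside $I$, or with the longest run of consecutive east steps). This identification is false. For $I=\nilrad$ (the full nilradical) every simple root lies in $\ideal$, so the run has length $n$, but the ballot sequence is the alternating one $1010\cdots$ whose maximum height is $1$ --- consistent with $\lambda_{\nilrad}=(n+1)$ having a single part. In the other direction, in type $A_3$ the ideal with $\ideal_{min}=\{[1,2]\}$ contains no simple root and its ballot sequence is $11011000$, which has maximum height $3$ and longest run of ones $2$; here $\lambda_I=(2,1,1)$ indeed has $3$ parts, so both versions of your characterization fail. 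Since the Gerstenhaber-algorithm bookkeeping you sketch (and yourself flag as unfinished) is aimed at matching the number of parts to a quantity that is not in fact the maximum height, this route cannot close the gap; in particular the ``extraction'' of a consecutive chain of simple roots from a jump in the algorithm is doomed, because there need be no such chain.

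The paper's proof deliberately avoids any direct combinatorial description of either side for general $I$. It first shows that the outer move preserves maximum height: dropping a minimal root replaces a local $01$ in the ballot sequence by $10$, and the outer-move hypotheses force this replacement to be $001\to 010$ or $011\to 101$, neither of which changes the maximum. Since the basic move is a special case of the outer move, and $\lambda_I$ is invariant under basic moves by Proposition \ref{prop:invariance}, Theorem \ref{thmA} reduces the statement to $I=\nilrad_P$, where the ballot sequence is $1^{c_1}0^{c_1}1^{c_2}0^{c_2}\cdots$ and both sides visibly equal $\mu_1$. The invariance of maximum height under the basic-move equivalence is exactly the ingredient your sketch omits, and it is what makes the ``reduction to the parabolic case'' that you gesture at actually legitimate.
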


\begin{proof}
	First, we show that the outer move preserves the maximum height of a ballot sequence.  
	In the language of ballot sequences,  the minimal roots of $\ideal$ correspond to spots with $b_r = 0$ and $b_{r+1} =1$.  
Set $j = \sum_{k \leq r} b_k$, the number of $1$'s up to and including the $1$ in this $01$ subsequence.
Set $i = r-j$, the number of $0$'s up to and including this $01$ subsequence.  
Then $\vecdiff{i}{j}$ is a minimal root of $\ideal$.  
The first condition of the outer move means that any $01$ subsequence extends on its left to $001$, that is, $b_{r-1}=0$. 
The second condition of the outer move says that the subsequence extends on its right to $011$, that is, $b_{r+2}=1$.
Now, dropping this minimal root from the ideal 
changes the ballot sequence by replacing $01$ with $10$.  If either the first or the second condition
of the outer move hold, so that $001$ changes to $010$ or $011$ changes to $101$, the maximum height of the sequence is clearly unchanged.  %It follows that  the basic move preserves maximum height.  

Next, $I=\nilrad_P$ as in Corollary \ref{cor:parabolic_case}
has maximum height equal to $\mu_1 = (\lambda_I)^*_1$, i.e., the number of parts of $\lambda_I$.
By Theorem \ref{thmA} any ideal is equivalent to some $\nilrad_P$ by a sequence of basic moves, hence the result follows 
since outer moves (hence basic moves) preserve maximum height and 
basic moves preserve $\lambda_I$ by Proposition \ref{prop:invariance}.
	\end{proof}

%The maximum height can also be interpreted as the largest integer $k$ with $rank(A) \leq (n+1)-k$ for all $A \in I$.  check.
\begin{rmk}
Both bounce count and maximum height have an algebraic interpretation.
Since $\lambda(\orbit_I) = \lambda_I$ by Corollary \ref{cor:parabolic_case}, 
it is clear  that $(\lambda_I)_1$ is the smallest positive integer $k$ such $A^{k}=0$ for all $A \in I$.
Moreover, it is pointed out in \cite[p. 536]{gerstenhaber} that $(\lambda_I)_1$ 
is the index of nilpotence of $I$ as an {\bf associative algebra}, while in \cite{andrews-et-al:nilpotence} it is shown 
that $(\lambda_I)_1$ is the index of nilpotence of $I$ as a {\bf Lie algebra} 
(actually, the latter use the class of nilpotence to refer to the number of brackets needed to get to zero, 
which equals $(\lambda_I)_1-1$, i.e., the bounce count).
We will say that $I$ has index $k$.
%***needs to be fixed.  in andrews, class of nilpotene of abelian is 1, since 1 bracket (and no less) kills it.

On the other hand, the number of parts of $\lambda_I$ is the smallest $k$ such that $A$ has rank at most $n-k$ for all $A \in I$. We say that $I$ has corank $k$.
\end{rmk}  

%\subsection{Involution}

In \cite[Section 5]{andrews-et-al:nilpotence} a bijection $\iota: \ideals \to \ideals$ was introduced, where it was shown that 
$\iota$ sends an ideal of bounce count $k-1$ to one of maximum height $k$.   
This has a nice interpretation in terms of Proposition \ref{max_height}.  Namely, for each $1 \leq k \leq n+1$,
\begin{equation}\label{index_corank1}
\iota(\{ I \in \ideals  \, | \,  (\lambda_I)_1 = k \}) = \{ I \in \ideals  \, | \,  (\lambda^*_I)_1 = k \},
%\sum_{\lambda: \lambda_1 = k} N_\lambda = \sum_{\lambda: \lambda_1 = k} N_{\lambda^*},
\end{equation}
so that $\iota$ sends an ideal of index $k$ to one of corank $k$.
The map $\iota$ also shows up as the inverse to the sweep map on Dyck paths \cite{haglund}.

\begin{rmk}  
We wonder whether 
the inner moves generates the equivalence class of ideals of index $k$ (i.e., Dyck paths with bounce count $k-1$), and
whether the outer move generates the equivalence class of ideals of corank $k$ (i.e., Dyck paths with maximum height $k$).
\end{rmk}
%Point out index of niolpotence coincides with that for the generic orbit in \cite{andrews-et-al:nilpotence}.

\subsection{Unit Interval Orders}

Another incarnation of $\ideals$ in type $A_n$ is via unit interval orders.  
These are certain partial orders on the set 
%$[n+1]:=\{1, 2, \dots, n+1\}$.  
$\{1, 2, \dots, n+1\}$.  
Given $I \in \ideal$, 
the corresponding unit interval order $P_I$ is defined by $i \prec j$ if and only if $t_{ij} \in I$.   

Consider a set partition of $P_I$ into a union of $k$ disjoint chains $C_1\cup \dots \cup C_k$
with $\#C_i \geq \#C_{i+1}$.  Let $\mu$ be the partition with $\mu_i = \# C_i$.
Then we can define a nilpotent element $x \in I$ as in \eqref{element} and $x$ satisfies $\lambda(x) = \mu$. 
Hence $\lambda_I \geq \mu$ since $\lambda_I = \lambda(x_I) \geq \lambda(x)$ by Corollary \ref{cor:parabolic_case}.
On the other hand, $\cup_{i=1}^k S_i$, where $S_i$ are as in \S\ref{algorithm}, is a disjoint union of $k$ chains in $P_I$. 
Therefore $\sum_{i=1}^k (\lambda_I)_i$ is the maximal cardinality of any such decomposition of $P_I$.  
Thus 
(see \cite{britz-fomin})
\begin{prop}
$\lambda_I$ is the Greene-Kleitman partition attached to $P_I$.
\end{prop}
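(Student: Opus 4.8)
The plan is to recognize this as an instance of the classical Greene--Kleitman theorem, which asserts that for any finite poset $P$, the maximum cardinality of a union of $k$ antichains equals $\sum_{i=1}^k \nu_i$ where $\nu$ is a partition, and \emph{dually} the maximum cardinality of a union of $k$ chains equals $\sum_{i=1}^k \nu^*_i$ for the conjugate partition $\nu^*$; moreover $\nu$ is independent of $k$, so it is a genuine partition invariant of $P$, called the Greene--Kleitman partition. Here $P = P_I$ and the claim is that this invariant $\nu$ equals $\lambda_I$. What the excerpt has already supplied is exactly the two inequalities needed to pin down the chain-side statistics of $P_I$: any chain decomposition of $P_I$ into $k$ chains yields, via \eqref{element}, a nilpotent $x \in I$ with $\lambda(x) = \mu$ (the block sizes being the chain lengths), and since $x \in I \subset \overline{\orbit_I}$ we get $\mu \le \lambda(\orbit_I) = \lambda_I$ by Corollary~\ref{cor:parabolic_case}; in particular $\sum_{i=1}^k \mu_i \le \sum_{i=1}^k (\lambda_I)_i$ for every $k$. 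Conversely, the sets $S_1, \dots, S_k$ from Gerstenhaber's algorithm in \S\ref{algorithm} are themselves $k$ disjoint chains in $P_I$ (since $a \in S_r$ with successor $b$ means $t_{ab} \in I$, i.e. $a \prec b$ in $P_I$) with total size $\sum_{i=1}^k (\lambda_I)_i$, achieving the bound.

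First I would state the Greene--Kleitman theorem in the dual (chain) form: for each $k$, the maximum size of a union of $k$ chains in a finite poset $P$ is $d_k(P) := \sum_{i=1}^k \nu^*_i$, where $\nu = \nu(P)$ is the Greene--Kleitman partition of $P$; equivalently $\nu^*$ is determined by $\nu^*_k = d_k(P) - d_{k-1}(P)$. Then I would observe that the two facts recalled above show $d_k(P_I) = \sum_{i=1}^k (\lambda_I)_i$ for all $k \ge 1$: the first inequality gives $d_k(P_I) \le \sum_{i=1}^k (\lambda_I)_i$ (an arbitrary union of $k$ chains has size at most this), and the $S_i$'s give the matching lower bound. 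Since $d_k(P_I) = \sum_{i=1}^k (\lambda_I)_i$ for all $k$, taking successive differences gives $(\lambda_I)_k = d_k(P_I) - d_{k-1}(P_I) = \nu^*(P_I)_k$, i.e. $\lambda_I = \nu^*(P_I)$. Hence $\lambda_I$ is the conjugate of the antichain-side Greene--Kleitman partition; depending on the convention adopted in the cited reference \cite{britz-fomin} for which of $\nu$ or $\nu^*$ is called ``the'' Greene--Kleitman partition of $P_I$, this is precisely the assertion (one just has to be consistent: the statistic governed by chains of $P_I$ corresponds to the \emph{rows} of $\lambda_I$, matching the standard convention that $\lambda_I$ records Jordan block sizes).

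There is no real obstacle once the pieces are assembled, since both inequalities are already in hand in the excerpt; the only point requiring care is bookkeeping of conjugation conventions. The cleanest route is to phrase everything in terms of the cumulative statistics $d_k(P_I)$ and cite \cite{britz-fomin} (or the original Greene--Kleitman paper) for the fact that these $d_k$ are the partial sums of a partition and that this partition (up to conjugation) is the Greene--Kleitman partition. I would therefore present the argument as: (1) recall Greene--Kleitman; (2) note $\sum_{i\le k}(\lambda_I)_i$ is an upper bound for unions of $k$ chains in $P_I$ by \eqref{element} and Corollary~\ref{cor:parabolic_case}; (3) note the $S_i$ attain it; (4) conclude $d_k(P_I) = \sum_{i\le k}(\lambda_I)_i$ for all $k$, hence $\lambda_I$ is (the conjugate of) the Greene--Kleitman partition of $P_I$. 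The main ``work'' was already done in establishing Corollary~\ref{cor:parabolic_case} via Theorem~\ref{thmA}; the rest is a formal consequence.
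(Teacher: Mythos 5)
Your proof is correct and follows essentially the same approach as the paper: the upper bound on unions of $k$ chains comes from the nilpotent element in \eqref{element} together with Corollary~\ref{cor:parabolic_case}, and the characteristic sequences $S_1,\dots,S_k$ from Gerstenhaber's algorithm provide the matching lower bound, pinning down the cumulative chain statistics $d_k(P_I)=\sum_{i\le k}(\lambda_I)_i$. The only addition is your explicit discussion of the chain-versus-antichain conjugation convention, which the paper handles implicitly by citing \cite{britz-fomin}.
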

%Moreover, the number of parts of $\lambda_I$ is the size of a maximal antichain in $P_I$.

There is another perspective from the point of view of the indifference graph of $P_I$ (see \cite{brosnan_chow}).  This is the 
undirected graph $\bf G_I$ on $\{1, 2, \dots, n+1\}$ 
where there is an edge between $i$ and $j$ if and only if $i$ and $j$ are not comparable in $P_I$. 
Clearly any decomposition for $P_I$ in $k$ disjoint chains
corresponds to a decomposition of $\bf G_I$ into a disjoint union of $k$ independent sets, and vice versa.   
The vertices in each independent set
can be colored the same color.  This means any coloring of $\bf G_I$ gives rise to a chain decomposition, and vice versa.
In particular, the largest part of $\lambda_I$ gains another interpretation as the independence number of $\bf G_I$
and the number of parts of $\lambda_I$ is the chromatic number of $\bf G_I$.
Tim Chow (private communication) 
proposed a coloring algorithm for $\bf G_I$ that is equivalent to  the one in \S\ref{algorithm} and showed, without reference to nilpotent orbits, that 
$\lambda_I$ dominates the partition associated to any coloring of $\bf G_I$.  

Finally, we point out that the first numbers of the $S_i$  in \S\ref{algorithm} form an antichain in $P_I$, which implies 
the number of parts of $\lambda_I$ is the size of any maximal antichain in $P_I$ 
and therefore equals the clique number of $\bf G_I$ (which then also equals the chromatic number).

\subsection{Enumeration}

Several results related to enumerating the ideals of index $k$, hence also those of corank $k$, 
are given in \cite{andrews-et-al:nilpotence}.   
It would be nice to be able to find a formula for the cardinality 
$N_\lambda$ of the equivalence class attached to $\orbit_\lambda$.  
We the list the values of 
$N_\lambda$ for low ranks in Table \ref{tab:N}.   
For $n \leq 4$, we have $N_\lambda = N_{\lambda^*}$, 
but the numbers start to diverge for larger $n$.
Still, we know from  \eqref{index_corank1} that for all $k$
\begin{equation}\label{index_corank}
\sum_{\lambda: \lambda_1 = k} N_\lambda = \sum_{\lambda: \lambda_1 = k} N_{\lambda^*},
\end{equation}
so there may be some further connection between $N_\lambda$ and $N_{\lambda^*}$.
From the last section, Equation \eqref{index_corank} can also be phrased in terms 
of indifference graphs on unit interval orders, using independence number and clique number in place of bounce count and maximum height.

%\begin{itemize}
%	\item The number of abelian ideals is $2^n$ \cite{cellini-papi:1}.  
%	This counts the total number of ideals which are equivalent to 
%	the nilradical of a maximal parabolic (which are abelian subalgebras).  
%	Such ideals then have $G$-saturation which is a nilpotent orbit with parts
%	not exceeding $2$.
%	
%	\item The number of ideals whose $G$-saturation is an orbit with at most two parts
%	is also seen to be $2^n$. 
%	
%\end{itemize}
%
%The number of ideals with largest part $k$ is the the number of Dyck paths with bounce number $k-1$.
%The number of ideals with number of parts equal to $k$ are those with maximal height $k$
%and this iss given by the formula, when  $k \geq \frac{n+1}{2}$, by
%% $$\binom{n+k}{2k}$$  % this is different
%
%$$\binom{2n}{n-k} - 3\binom{2n}{n-k-1} +3\binom{2n}{n-k-2} + \binom{2n}{n-k-3}.$$ 

%$$k \geq \frac{n+1}{2}$$ 

%\vspace{.5in} 
%\begin{tabular}{|c|c|} \hline
%	\multicolumn{2}{|c|}{number with max height $k$} \\ \hline
%	\multicolumn{1}{|c|}{$k$} 
%	& \multicolumn{1}{|c|}{Size} \\ \hline
%	$n$  & 1 \\ 
%	$n-1$ & $2n-3$ \\ 
%	$n-2$  & $2n^2-7n+3$ \\ 
%%	$[3,1,1]$ & 10 \\ 
%%	$[2,2,1]$ & 8 \\
%%	$[2,1^3]$ & 7 \\  
%%	$[1^5]$ & 1 \\ 
%	\hline
%\end{tabular}

%For example, when t $k=n$, get $1$
%at $k=n-1$, get $2n -3$
%at $k=n-2$, get $n(2n-1)-6n+3 = 2n^2-7n+3$

%We record the size of equivalence classes up to rank $7$ according to their
%the partition $\lambda$ of $\orbit_I$.  The cases $n=1, 2$ are clear since $\lambda=n$ and its dual only
%contain the one element ($\nilrad$ and $0$, respectively).

%\vspace{.5in} 
\begin{table}
\begin{center}
%\begin{minipage}{0\textwidth}
		\begin{tabular}{|l|c||l|c||l|c||l|c|} \hline
		
			\multicolumn{2}{|c||}{$A_6$}
			& \multicolumn{2}{|c||}{$A_5$}
			& \multicolumn{2}{|c||}{$A_4$}  
		        & \multicolumn{2}{|c||}{$A_3$} \\ \hline
%			 & $A_5$ & $A_4$} \\ \hline
			\multicolumn{1}{|c|}{$\lambda$} 
			& \multicolumn{1}{|c||}{$N_\lambda$}  
			
			&\multicolumn{1}{|c|}{$\lambda$} 
			& \multicolumn{1}{|c||}{$N_\lambda$} 
			
			&\multicolumn{1}{|c|}{$\lambda$} 
			& \multicolumn{1}{|c||}{$N_\lambda$} 
			
			&\multicolumn{1}{|c|}{$\lambda$} 
			& \multicolumn{1}{|c|}{$N_\lambda$} 
			
			\\ \hline 
			%#\multicolumn{2}{|c|}{$A_5$} \\ \hline
			
			$[7]$  & 1 &&&&&& \\
			$[6,1]$  & 11  &&&&&& \\
			$[5,2]$  & 32 &&&&&& \\  
			$[5,1^2]$  & 20 &&&&&& \\
			$[4,3]$  & 20   & $[6]$  & 1  &&&& \\
			$[4,2,1]$  &  87 & $[5,1]$  & 9 &&&& \\
			$[4,1^3]$ & 25 & $[4,2]$  &  18 &&&& \\ 
			$[3^2,1]$  & 29 & $[4,1^2]$ & 15 &&&& \\
			$[3,2^2]$  & 33 &  $[3^2]$  & 4 &  $[5]$  & 1 &&\\ 
			$[3,2,1^2]$  & 84 & $[3,2,1]$  & 37 &  $[4,1]$ & 7 &&\\ 
			$[3,1^4]$ & 23 & $[3,1^3]$ & 16 & $[3,2]$  & 8 &  $[4]$ & 1 \\  
			$[2^3,1]$  &  23 & $[2^3]$  &  5 &  $[3,1^2]$ & 10 & $[3,1]$ & 5 \\ 
			$[2^2,1^3]$ &  29 &$[2^2,1^2]$ &  17 &$[2^2,1]$ & 8 &$[2^2]$ & 2  \\
			$[2,1^5]$ & 11 &  $[2,1^4]$ & 9 &   $[2,1^3]$ & 7 & $[2,1^2]$ & 5 \\  
			$[1^7]$ & 1 &     $[1^6]$ & 1 &   $[1^5]$ & 1  &  $[1^4]$ & 1\\ 

			\hline 
		\end{tabular}
		\caption{\label{tab:N}Values of $N_\lambda$.}
%	\end{minipage}
\end{center}

\end{table}

%\subsection{Kreweras partition}

There is a second partition attached to an ideal coming from the minimal roots in $\ideal$.   Namely, if we define
$$e_I :=\sum_{\beta \in \ideal_{min}}  e_\beta,$$
then we can also associate to $I$ the partition $\lambda(e_I)$, which we call the Kreweras partition of $I$.  
The element
$e_I$ can be defined for any Lie type.  
Since $\ideal_{min}$ are the simple roots of the parabolic subsystem of $\ro$ they span \cite[Theorem 1]{sommers:ideals_combo},  
the element $e_I$ is always regular in a Levi subalgebra (outside of type $A$, the elements in $\orbit_I$ are not always 
regular in a Levi subalgebra).   
%The Kreweras partition of $I$ is defined to be $\lambda(e_I)$.
Since $e_I \in I$, in type $A_n$ it follows that 
\begin{equation}\label{dominance}
\lambda_I \geq \lambda(e_I). 
\end{equation}
%in the dominance order on partitions.

Unlike $N_{\lambda}$, there are closed formulas for $K_\lambda := \# \{ I \in \ideals \, | \,  \lambda(e_I) = \lambda \}$. 
 In fact, the formulas exist in all Lie types, where they involve the exponents of a hyperplane arrangement attached to the parabolic root system determined by $\ideal_{min}$ \cite[Proposition 6.6]{sommers:ideals_combo}.  In type $A_n$, they coincide with the Kreweras numbers, which are given in terms of multinomials by
$$ K_\lambda = \frac{1}{n+2} \binom{n+2}{ a_1(\lambda),a_2(\lambda), \ldots, a_{n+1}(\lambda), n+2- \ell},$$
where $a_j(\lambda)$ denotes the multiplicity of the number
 $j$ among the parts of $\lambda$ and $\ell$ is the total number of parts. 
 
%Focusing only on the number of minimal roots of $I$, denoted $m_I$, which is the one less than the number of parts of $\lambda(e_I)$, 
%then we recover the usual Narayana numbers in type $A$.   This number is the number of valleys of the associated Dyck path and 
%the peaks are exactly the number of parts of $\lambda(e_I)$.   The dominance condition, using Remark, means 
%$\lambda_1$ and $m_I$.

Let $m_I$ denote the number of minimal roots of $I$.  Then $m_I$ equals the rank of $e_I$, which is $(n+1) - \lambda(e_I)^*_1$. 
In terms of Dyck paths, $m_I$ is the number of valleys of the Dyck path, while $m_I+1$ is the number of peaks.
Then $\# \{ I \in \ideals \, | \,  m_I =k  \}$ give the Narayana numbers (\cite[Proposition 4.1]{panyushev:duality}).
We make the following observation based on analyzing the construction of $\iota$ in \cite{andrews-et-al:nilpotence}.
\begin{prop} \label{narayana_inversion}
With respect to $\iota$, we have 
\begin{equation}  \label{narayana}
m_I = n - m_{\iota(I)}
\end{equation}   
for $I \in \ideals$.
Putting \eqref{narayana} together with \eqref{index_corank1} we get
\begin{equation}  \label{bistat_narayan}
\iota(\{ I \in \ideals  \, | \,  (\lambda_I)_1 = r, m_I = s \}) = \{ I \in \ideals  \, | \,  (\lambda^*_I)_1 = r, m_I = n-s \}
\end{equation}
for all $r,s$.
\end{prop}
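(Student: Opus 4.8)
The identity \eqref{bistat_narayan} is a formal consequence of \eqref{narayana} and \eqref{index_corank1}. Indeed, since $\iota$ is a bijection of $\ideals$, applying \eqref{narayana} to $\iota^{-1}(I)$ gives $m_{\iota(I)} = n - m_I$ for every $I$, while \eqref{index_corank1} says precisely that $(\lambda_I)_1 = (\lambda^*_{\iota(I)})_1$ for every $I$. Hence $I$ satisfies $(\lambda_I)_1 = r$ and $m_I = s$ if and only if $\iota(I)$ satisfies $(\lambda^*_{\iota(I)})_1 = r$ and $m_{\iota(I)} = n-s$, which is \eqref{bistat_narayan}. So the whole content is \eqref{narayana}, equivalently $m_{\iota(I)} = n - m_I$. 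Recall that $m_I$, the number of minimal roots of $\ideal$, equals the number of valleys of the Dyck path attached to $I$---i.e.\ the number of factors $b_r b_{r+1} = 01$ in the associated ballot sequence---and also equals $(n+1)$ minus the number of parts of the Kreweras partition $\lambda(e_I)$.

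The plan is to read this statistic off the explicit construction of $\iota$ given in \cite[Section 5]{andrews-et-al:nilpotence}, which, as noted there, agrees with (the inverse of) the sweep map on Dyck paths \cite{haglund}. In that construction the lattice path of $\iota(I)$ is obtained by re-listing the steps of the lattice path of $I$ in increasing order of a ``level'' assigned to each step---the level being, in a suitable normalization, the height at which the step sits inside the path---with a fixed rule breaking ties among steps of equal level. The first task is to make the level function and the tie-breaking rule explicit in the normalization of that reference and to re-express ``number of valleys'' on both sides in these terms: on the output side a valley is a consecutive ``down-step then up-step'' in the swept order, and such pairs occur only at positions controlled by the level function and the tie-break (within a single level, or at a transition between consecutive levels). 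The second and main task is to count these and verify the total is exactly $n - m_I$: the $01$-factors of the input record, level by level, how the down- and up-steps of the output are distributed among the levels, and the number of down-then-up corners produced is seen to be complementary to $m_I$ within $\{0, 1, \dots, n\}$; the two trivial ideals $\ideal = \emptyset$ and $\ideal = \ro^+$, where $m_I = 0$ and $m_I = n$, fix the additive constant.

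The main obstacle is precisely this last count: one must track the tie-breaking carefully so that the valley total comes out to be exactly $n - m_I$ rather than off by one, and one must separately treat the two ends of the path, where a change of level need not be a genuine corner. Since $\iota$ is an explicit combinatorial procedure with no hidden input, this is in the end a finite, mechanical verification, which is why we record \eqref{narayana} as an observation obtained by analyzing the construction. As a check, \eqref{narayana} refines the Narayana symmetry $\#\{I \in \ideals : m_I = s\} = \#\{I \in \ideals : m_I = n-s\}$, and, together with \eqref{index_corank1}, it yields the bivariate refinement $\#\{I : (\lambda_I)_1 = r,\ m_I = s\} = \#\{I : (\lambda^*_I)_1 = r,\ m_I = n-s\}$ underlying \eqref{bistat_narayan}.
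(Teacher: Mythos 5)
The paper itself offers no proof of this proposition: it is stated as an ``observation based on analyzing the construction of $\iota$'' in \cite{andrews-et-al:nilpotence}, so there is no written argument to compare yours against. Your formal reduction is correct: \eqref{bistat_narayan} follows from \eqref{narayana} and \eqref{index_corank1} because $\iota$ is a bijection, and you rightly isolate \eqref{narayana}, i.e.\ $m_I + m_{\iota(I)} = n$, as the sole nontrivial content.

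For \eqref{narayana} itself, however, what you supply is a plan rather than a proof. You list the right ingredients --- $\iota$ as (the inverse of) the sweep map realized by level-sorting the steps of the path, counting valleys of input versus output, tracking the tie-breaking rule and the two ends of the path, and pinning the additive constant on the two extreme ideals --- and then stop at exactly the step that carries the content, calling it a ``finite, mechanical verification.'' Since \eqref{narayana} is an identity for all $n$ and all $I$, it is not a finite check; the tie-breaking bookkeeping you yourself flag as the main obstacle is precisely what a proof would have to supply. As written, your argument is a roadmap in the same spirit as the paper's unproved observation, not a proof. If you wish to complete it, the cleanest packaging is probably this: in a Dyck path of semilength $n+1$, each of the $n$ non-initial up-steps is preceded either by a down-step (a valley, counted by $m_I$) or by another up-step (an up-up adjacency, counted by $n - m_I$), so \eqref{narayana} is the assertion that the sweep map carries up-up adjacencies of the input to valleys of the output. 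Phrasing it as a statement about adjacent step-pairs, rather than a numerical identity, sidesteps the off-by-one worries you raise and gives you something you can chase through the level-sorting one pair at a time.
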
 
Panyushev also constructed a bijection that satisfies equation \eqref{narayana} in \cite{panyushev:duality}.  That bijection is an involution,
whereas $\iota$ can have large order.

\begin{table} 
	\begin{center}
		%\begin{minipage}{0\textwidth}
		\begin{tabular}{|l||l|c|c|c|c|c|c|c|c|c|c||c|} \hline
			$(\lambda_I)_1, m_I$  & 10 & 9& 8 & 7 &6&5&4&3&2&1&0 &  \\ \hline		
			$(\lambda_I)^*_1, m_I$   & 0 & 1& 2 & 3 & 4&5&6&7&8&9&10 &  \\ \hline			
			%  &	& 1  & 55  & 825 & 4950 & 13860 & 19393 & 13860 & 4950 & 825 & 55 & 1 \\		
			\hline
			1 & 	0 & 0 & 0 & 0 & 0 & 0 & 0 & 0 & 0 & 0 & 1 & 1\\
			2 & 	0 & 0 & 0 & 0 & 0 & 11 & 165 & 462 & 330 & 55 & 0  & 1023\\ 
			3  & 	0 & 0 & 0 & 7 & 301 & 2090 & 4257 & 2772 & 495 & 0 & 0 & 9922   \\
			4  & 	0 & 0 & 5 & 309 & 2821 & 7293 & 6435 & 1716 & 0 & 0 & 0 & 18579  \\ 
			5  & 	0 & 0 & 65 & 1119 & 4823 & 7007 & 3003 & 0 & 0 & 0 & 0  & 16017 \\ 
			6 & 	0 & 3 & 162 & 1515 & 4095 & 3003 & 0 & 0 & 0 & 0 & 0 & 8778 \\
			7  & 	0 & 7 & 219 & 1320 & 1820 & 0 & 0 & 0 & 0 & 0 & 0 & 3366    \\ 
			8  & 	0 & 11 & 221 & 680 & 0 & 0 & 0 & 0 & 0 & 0 & 0 & 912 \\
			9  & 	0 & 15 & 153 & 0 & 0 & 0 & 0 & 0 & 0 & 0 & 0& 168 \\ 
			10& 	0 & 19 & 0 & 0 & 0 & 0 & 0 & 0 & 0 & 0 & 0 & 19 \\
			11 & 1& 0 & 0 & 0 & 0 & 0 & 0 & 0 & 0 & 0 & 0 & 1\\			
			\hline
			&	1  & 55  & 825 & 4950 & 13860 & 19404 & 13860 & 4950 & 825 & 55 & 1 & sum\\		\hline
			
		\end{tabular}
		\caption{\label{bistatistic} Joint valley-bounce count (or valley-maximum height) statistics for $A_{10}$  }
		
		%	\end{minipage}
	\end{center}
\end{table}

%Putting this result  together with the one in \cite{}, we get
%$$\iota(\{ I \in \ideals  \, | \,  (\lambda_I)_1 = r, m_I = s \}) = \{ I \in \ideals  \, | \,  (\lambda^*_I)_1 = r, m_I = n-s \}.$$
We list the cardinality of the sets in \eqref{bistat_narayan} for $A_{10}$ in Table \ref{bistatistic}.
This matrix for general $n$ has zeros below the anti-diagonal since 
\eqref{dominance} implies $(\lambda^*_I)_1 \leq (\lambda(e_I)^*)_1$,
hence $m_I \geq (n+1) - (\lambda^*_I)_1$. % (so we are using the indexing in the second row).
With the indexing from the first row, 
the anti-diagonal entries seem to be $\binom{n+m_I}{n-m_I}$, see A054142 from \cite{oeis}. 
Finally, $m_I \leq n+1-\lceil \frac{n+1}{(\lambda_I)_1} \rceil$ since this is the larger possible rank of a matrix
given its index of nilpotence. This explains the zeros in the upper left corner. % (using the indexing in the first row).  
It would be interesting to find formulas for the other cardinalities of the sets in \eqref{bistat_narayan},
or better yet, the joint statistic on the two partitions  $\lambda_I$ and $\lambda(e_I)$
attached to $I$.

\section*{Acknowledgments}
%%%%%%%%%%%%%%%%%%%%%%%%%%%
We thank Vic Reiner for suggesting in a 2007 email that a statement like Proposition \ref{narayana_inversion} should hold.  We thank
Bill Casselman for directing us to Gerstenhaber's algorithm and additional helpful comments.  We thank  Tim Chow, Martha Precup, and John Shareshian
for helpful conversations related to  the unit interval order.

\bibliography{A_ideal_equivalence_new}

\ifx\undefined\bysame
\newcommand{\bysame}{\leavevmode\hbox to3em{\hrulefill}\,}
\fi
\begin{thebibliography}{10}

\bibitem{achar-sommers:local}
P.~Achar and E.~Sommers, {\em Local systems and functions on orbit covers},
  Representation Theory {\bf 6} (2002), 190--201.

\bibitem{andrews-et-al:nilpotence}
G.~E. Andrews, C.~Krattenthaler, L.~Orsina, and P.~Papi, {\em ad-nilpotent
  {$\mathfrak b$}-ideals in {${\rm sl}(n)$} having a fixed class of nilpotence:
  combinatorics and enumeration}, Trans. Amer. Math. Soc. {\bf 354} (2002),
  no.~10, 3835--3853.

\bibitem{britz-fomin}
T.~Britz and S.~Fomin, {\em Finite posets and {F}errers shapes}, Adv. Math.
  {\bf 158} (2001), no.~1, 86--127.

\bibitem{broer:vanishing}
A.~Broer, {\em Normality of Some Nilpotent Varieties and Cohomology of Line
  Bundles on the Cotangent Bundle of the Flag Variety}, Lie Theory and Geometry
  (Boston), Progr. Math., 123, Birkh{\"a}user, Boston, 1994, pp.~1--19.

\bibitem{brosnan_chow}
P.~Brosnan and T.~Y. Chow, {\em Unit interval orders and the dot action on the
  cohomology of regular semisimple {H}essenberg varieties}, Adv. Math. {\bf
  329} (2018), 955--1001.

\bibitem{carter}
R.~Carter, {\em Finite groups of Lie type. Conjugacy classes and complex
  characters}, Wiley Classics Library, John Wiley \& Sons, Ltd, Chicester,
  1993.

\bibitem{cellini-papi:2}
P.~Cellini and P.~Papi, {\em Ad-nilpotent ideals of a Borel subalgebra. II}, J.
  Algebra, vol. 258, 2002, Special issue in celebration of Claudio Procesi's
  60th birthday, pp.~112--121.

\bibitem{dlp}
C.~De~Concini, G.~Lusztig, and C.~Procesi, {\em Zero set of a nilpotent vector
  field}, Journal of the A.M.S. {\bf 1} (1988), no.~1, 15--34.

\bibitem{fang:equivalence}
C.~Fang, {\em Ad-nilpotent ideals and equivalence relations}, J. Algebra {\bf
  323} (2010), no.~7, 2016--2025.

\bibitem{fenn}
M.~Fenn, {\em Generating equivalence classes of {B}-stable ideals}, ProQuest
  LLC, Ann Arbor, MI, 2008, Thesis (Ph.D.)--University of Massachusetts
  Amherst.

\bibitem{gerstenhaber}
M.~Gerstenhaber, {\em Dominance over the classical groups}, Ann. of Math. (2)
  {\bf 74} (1961), 532--569.

\bibitem{haglund}
J.~Haglund, {\em The {$q$},{$t$}-{C}atalan numbers and the space of diagonal
  harmonics}, University Lecture Series, vol.~41, American Mathematical
  Society, Providence, RI, 2008, With an appendix on the combinatorics of
  Macdonald polynomials.

\bibitem{harada_precup}
M.~Harada and M.~E. Precup, {\em The cohomology of abelian {H}essenberg
  varieties and the {S}tanley-{S}tembridge conjecture}, Algebr. Comb. {\bf 2}
  (2019), no.~6, 1059--1108, Revised edition of [ MR3940624].

\bibitem{hinich:van}
V.~Hinich, {\em On the singularities of nilpotent orbits}, Israel J. Math {\bf
  73} (1991), no.~3, 297--308.

\bibitem{jantzen}
J.~C. Jantzen, {\em Nilpotent orbits in representation theory}, Lie theory,
  Progr. Math., vol. 228, Birkh\"{a}user Boston, Boston, MA, 2004, pp.~1--211.

\bibitem{kawanaka}
N.~Kawanaka, {\em Generalized {G}elfand-{G}raev representations of exceptional
  simple algebraic groups over a finite field. {I}}, Invent. Math. {\bf 84}
  (1986), no.~3, 575--616.

\bibitem{lawton}
G.~Lawton, {\em Two-sided cells in the affine {W}eyl group of type ${\tilde
  A}_{n-1}$}, J. Algebra {\bf 120} (1989), no.~1, 74--89.

\bibitem{Lusztig:square}
G.~Lusztig, {\em Some examples of square integrable representations of
  semisimple {$p$}-adic groups}, Trans. Amer. Math. Soc. {\bf 277} (1983),
  no.~2, 623--653.

\bibitem{mcgovern}
W.~M. McGovern, {\em Rings of regular functions on nilpotent orbits and their
  covers}, Invent. Math. {\bf 97} (1989), no.~1, 209--217.

\bibitem{mizuno}
K.~Mizuno, {\em The conjugate classes of unipotent elements of the {C}hevalley
  groups {$E\sb{7}$} and {$E\sb{8}$}}, Tokyo J. Math. {\bf 3} (1980), no.~2,
  391--461.

\bibitem{oeis}
OEIS Foundation Inc. (2021), The On-Line Encyclopedia of Integer Sequences,
  http://oeis.org.

\bibitem{panyushev:van}
D.~Panyushev, {\em Rationality of singularities and the Gorenstein property of
  nilpotent orbits}, Funct. Anal. Appl. {\bf 25} (1991), no.~3, 225--226.

\bibitem{panyushev:duality}
D.~I. Panyushev, {\em ad-nilpotent ideals of a {B}orel subalgebra: generators
  and duality}, J. Algebra {\bf 274} (2004), no.~2, 822--846.

\bibitem{shi:book}
J.-Y. Shi, {\em The Kazhdan-Lusztig cells in certain affine Weyl groups},
  Lecture Notes in Mathematics, no. 1179, Springer-Verlag, 1986.

\bibitem{sommers:ideals_combo}
E.~Sommers, {\em $B$-stable ideals in the nilradical of a Borel subalgebra},
  Canadian Math. Bull. {\bf 48} (2005), no.~3, 460--472.

\bibitem{sommers:e_classes}
\bysame, {\em Equivalence classes of ideals in the nilradical of a Borel
  subalgebra}, Nagoya Math. J. {\bf 183} (2006), 161--185.

\end{thebibliography}
\bibliographystyle{pnaplain}

\end{document}